\newcommand{\oR}{{\mathbb R}}
\newcommand{\oN}{{\mathbb N}}
\newcommand{\oZ}{{\mathbb Z}}
\newcommand{\EE}{{\mathbb E}}
\newcommand{\PP}{{\mathbb P}}
\newtheorem{defn}{Definition}
\newtheorem{res}{Lemma}
\newtheorem{thm}{Theorem}
\newtheorem{ex}{Example}
\renewenvironment{proof}{\noindent{\bf Proof:} }{\hfill $\square$ \\}
\begin{document}
\begin{center}
{\Large\sc A Spectral Mean for Point Sampled Closed Curves}\\[.5in] 

\noindent
{\large M.N.M. van Lieshout}\\[.2in]
\noindent
{\em CWI \\
Science Park 123, 1098 XG Amsterdam, The Netherlands} 
\end{center}

\begin{verse}
{\footnotesize
\noindent
{\bf Abstract}\\
\noindent
We propose a spectral mean for closed curves described by sample
points on its boundary subject to mis-alignment and noise. 
First, we ignore mis-alignment and derive maximum likelihood 
estimators of the model and noise parameters in the Fourier domain.
We estimate the unknown curve by back-transformation and derive
the distribution of the integrated squared error. Then, we model
mis-alignment by means of a shifted parametric diffeomorphism and
minimise a suitable objective function simultaneously over the unknown 
curve and the mis-alignment parameters. Finally, the method is
illustrated on simulated data as well as on photographs of Lake Tana 
taken by astronauts during a Shuttle mission.
\\[0.2in]

\noindent
{\em Keywords \& Phrases:}
alignment, cyclic Gaussian process, diffeomorphism, flow,
integrated squared error, Jordan curve, spectral analysis.

\noindent
{\em Mathematics Subject Classification 2000:\/}
60D05, 62M30.
}

\begin{center}
{\em In memory of J. Harrison.}
\end{center}
\end{verse}

\section{Introduction}

Many geographical or biological objects are observed in image form.
The boundaries of such objects are seldom crisp due to measurement 
error and discretisation, or because the boundaries themselves are 
intrinsically indeterminite \cite{BurrFran96}. Moreover, the objects
are not static so that if multiple images are taken, the object may 
have been deformed. This can be due, for example, to patient movements
in medical imagery of organs, or to external influences such as flooding
in remotely sensed images of rivers or lakes.

One attempt to model natural objects under uncertainty is fuzzy set 
theory (see e.g.\ \cite{Zimm01})). However, the underlying axioms are 
too poor to handle topological properties of the shapes to be modelled 
and cannot deal with correlation. Similarly, the belief functions that 
lie at the heart of the Dempster--Shafer theory \cite{Demp67,Shaf76} do 
not necessarily correspond to the containment function of a well-defined 
random closed set \cite{Molc05}.

Here, we propose to combine ideas from pattern analysis
\cite{GrenMill07,Youn10} with the theory of cyclic Gaussian random 
processes to estimate simultaneously the object boundary and the 
noise parameters. In contrast to deformable templates methods
(see e.g. \cite{Bigo11} for a recent example in one dimension), 
in our approach the deformation is not used to model fluctations in 
the appearance of the object of interest but rather to align
parametrisations of the boundary; the fluctuations in appearance 
are taken care of by the noise process. 

The plan of this paper is as follows. In Section~\ref{S:prelim} we recall 
basic facts about planar curves, cyclic Gaussian random processes and
Fourier analysis. In Section~\ref{S:inference} we formulate a model
for sampling noisy curves, carry out inference in the Fourier domain
and quantify the error. Section~\ref{S:align} is devoted to the estimation
of alignment parameters and in Section~\ref{S:applic} we illustrate the
approach on simulated data as well as on a series of observations of
an Ethiopean lake from space. The paper concludes with a discussion and
pointer to future work.

\section{Noisy curves}
\label{S:prelim}

In this section we recall basic facts about planar curves, Fourier
bases and cyclic Gaussian random processes.

\subsection{Planar curves}
\label{S:curves}

Throughout this paper we model the boundary of the random object of 
interest by a smooth (simple) closed curve.

Consider the class of functions 
\(
\Gamma: I \to \oR^2
\)
from some interval $I$ to the plane. Define an equivalence relation
$\sim$ on the function class as follows. Two functions $\Gamma$ and 
$\Gamma^\prime$ are equivalent, $\Gamma \sim \Gamma^\prime$, if there 
exists a strictly increasing function $\varphi$ from $I$ onto another 
interval $I^\prime$ such that $\Gamma = \Gamma^\prime \circ \varphi$. 
Note that $\varphi$ is a homeomorphism. The relation defines a family 
of equivalence classes, each of which is called a {\em curve\/}. Its 
member functions are called {\em parametrisations\/}. Since the images 
of two parametrisations of the same curve are identical, we shall, 
with slight abuse of notation, use the symbol $\Gamma$ for a specific 
parametrisation, for a curve and for its image. 

A curve is said to be continuous if it has a continuous parametrisation, 
in which case all parametrisations are continuous; it is simple if it has 
a parametrisation that is injective. A Jordan curve has the additional 
property of being {\em closed\/}, in other words, it is the image of a 
continuous function $\Gamma$ from $[p, q]$ to $\oR^2$ that is injective 
on $[p, q)$ and for which $\Gamma(p) = \Gamma(q)$. By the 
Jordan--Sch\H{o}nflies theorem, the complement of any Jordan curve in 
the plane consists of exactly two connected components: a bounded one 
and an unbounded one separated by $\Gamma$. The bounded component is
called the interior of $\Gamma$ and can be thought of as the object. 
Since closed curves have neither a `beginning' nor an `end', a 
{\em rooted parametrisation\/} is provided by a point on the curve 
together with a cyclic parametrisation from that point in a given 
direction (say with the interior to the left). For convenience, we 
shall often rescale the definition interval to $[-\pi, \pi]$, 

In the statistical inference to be discussed in the next section, we 
need derivatives. In this context, it is natural to assume a curve to be 
parametrised by some function $\Gamma$ that is $C^1$ and the same degree 
of smoothness to hold for the functions $\varphi$ that define the 
equivalence relation between parametrisations. In effect, $\varphi$ 
should be a diffeomorphism. See \cite[Chapter~1]{Youn10} for further 
details.

\subsection{Fourier representation}
\label{S:Fourier}

Let $\Gamma = (\Gamma_1, \Gamma_2): [-\pi,\pi]\to\oR^2$ be a $C^1$ 
function with $\Gamma_i(-\pi) = \Gamma_i(\pi)$, $i=1, 2$. Recall that 
the family of functions $\{ \cos(jx), \sin(jx) : j\in \oN_0 \}$ forms 
an orthogonal basis for $L_2([-\pi,\pi])$, the space of all square 
integrable functions on $[-\pi,\pi]$, see e.g.\ \cite[Section~12]{GoGo81}, 
so that $\Gamma$ can be approximated by a trigonometric polynomial of the 
form 
\[
 \sum_{j=0}^J \left[ \mu_j \cos (jx) + \nu_j \sin ( jx ) \right].
\]
The vectors $\mu_j$ and $\nu_j$ are called the {\em Fourier\/}
coefficients of order $j$ and satisfy
\begin{equation}
\label{e:FourierGamma}
\left\{ \begin{array}{lll}
\mu_{0,i} & = & \frac{1}{2\pi} \int_{-\pi}^\pi \Gamma_i( \theta) d\theta \\
\mu_{j,i} & = & \frac{1}{\pi} 
  \int_{-\pi}^\pi \Gamma_i( \theta) \cos(j\theta) d\theta \\
\nu_{j,i} & = & \frac{1}{\pi}
  \int_{-\pi}^\pi \Gamma_i( \theta) \sin(j\theta) d\theta 
\end{array} \right.
\end{equation}
for $j\in\oN$ and $i=1, 2$. 
Moreover, by Parseval's identity,
\begin{equation}
\label{e:Parseval}
\frac{1}{\pi} \int_{-\pi}^\pi || \Gamma(\theta) ||^2 d\theta = 
2 || \mu_0 ||^2 + 
\sum_{j=1}^\infty \left[ || \mu_j ||^2 + || \nu_j ||^2 \right].
\end{equation}

\subsection{Stationary cyclic Gaussian processes}
\label{S:noise}

Let $N = ( N_1, N_2 )$ be a stationary cyclic Gaussian process on 
$[-\pi, \pi]$ with values in $\oR^2$ having independent components with 
zero mean and continuous covariance function $\rho$. If the components
$N_i(\theta)$, $i=1,2$, have almost surely continuous sample paths, 
their $j$-th order Fourier coefficients (cf.\ Section~\ref{S:Fourier}) 
are well-defined normally distributed random variables with mean zero 
and variance 
\[
 r_j \int_{-\pi}^{\pi} \rho(\theta) \cos( j\theta) d\theta.
\]
For $j\in \oN$, $r_j = 1/ \pi$, for $j=0$, $r_j = 1/(2\pi)$.
Moreover, all Fourier coefficients are uncorrelated hence independent. 
For details, see e.g.\ \cite[Section~5.3]{CramLead67}. 

Reversely, let $A_{j,i}$ and $B_{j,i}$ be mutually independent 
zero-mean Gaussian random variables with variances $\sigma^2_j$
that are small enough for the series $\sum_j \sigma^2_j$ to converge.
Set, for $\theta \in [-\pi,\pi]$,
\begin{equation}
\label{e:N}
N_{i}(\theta) = \sum_{j=0}^\infty \left[ A_{j,i} \cos( j\theta ) + 
                                    B_{j,i} \sin( j\theta ) \right],
\quad \quad i = 1, 2.
\end{equation}
Then the $N_{i}$ are independent stationary cyclic Gaussian processes 
with zero mean and covariance function 
\[
\rho( \theta) =  \sum_{j=0}^\infty \sigma_j^2 \cos( j\theta ) =
\sigma_0^2 + \sum_{j=1}^\infty \frac{ \sigma_j^2 }{2} \left[ 
  e^{ij\theta} + e^{-ij\theta} \right].
\]
The series is absolutely convergent by assumption. Moreover, $\rho$ is
continuous. However, for the existence of a continuous version, further 
conditions are needed. From the above formula it is clear that the spectral 
measure has density 
\(
m(j) = \sigma_j^2 / 2
\)
on $\oZ \setminus \{ 0 \}$ and $m(0) = \sigma_0^2$. 
Theorem 25.10 in \cite{RogeWill94} then implies that if
\begin{equation}
\label{e:C1}
\sum_{j=1}^\infty j^{2k+\epsilon} \sigma^2_j < \infty
\end{equation}
for $k\in\oN \cup \{ 0 \}$, $\epsilon > 0$, there exists a version of 
$N_{i}$ that is $k$ times continuously differentiable. From now on we 
shall always assume (\ref{e:C1}) for $k=1$.

\begin{ex}
\label{e:Hobolth}
{\rm 
A convenient model is the {\em generalised $p$-order model\/} of
\cite{Hoboetal03}, see also \cite{AletRuff13,JonsJens05},
in which
\[
   \sigma_j^{-2} = \alpha + \beta j^{2p},
\quad \quad j\geq 2,
\]
for parameters $\alpha, \beta > 0$. The parameter $p$ determines
the smoothness. By (\ref{e:C1}), a continuous version exists for 
all $p>1/2$; for $p>3/2$ one that is continuously differentiable. 
}
\end{ex}

\section{Parameter estimation}
\label{S:inference}

\subsection{Data model}

In this paper, the data consist of multiple observations of an object of 
interest in discretised form as a list of finitely many points 
\(
( X^l)_{ l = 1, \dots, n }
\) 
on its boundary, either explicitly (cf.\ Figure~\ref{F:sim}) or implicitly 
in the form of an image as in Figure~\ref{F:Tana}. In other words, the 
lists $(X^l)_l$ trace some unknown closed curve $\Gamma$ affected by noise. 
In the sequel, the number of boundary points, $n$, will be odd.

As discussed in Subsection~\ref{S:curves}, $\Gamma$ may be parametrised by 
a function from $[-\pi, \pi]$ to the plane. As for the noise $N$, in the 
absence of systemetic errors, it is natural to assume that 
$\EE N(\theta) = 0$ for all $\theta\in[-\pi,\pi]$ and that the correlation 
between errors $N(\theta)$ and $N(\eta)$ depends only on the absolute 
difference $|\theta - \eta|$. Thus, we model the noise by independent 
mean-zero stationary cyclic Gaussian processes (\ref{e:N}) on 
$[-\pi, \pi]$.

Alignment between the observed discretised curves is necessary, both to 
fix the roots and to allow for differences in parametrisations. This is 
taken care of by shift parameters $\alpha \in [-\pi, \pi]$ for the 
root and diffeomorphisms $\varphi : [-\pi, \pi] \to [-\pi, \pi]$ for the 
reparametrisation. 

To summarise, we arrive at the following model.

\begin{defn}
\label{d:data}
Let $\Gamma = (\Gamma_1, \Gamma_2): [-\pi,\pi]\to\oR^2$ be a $C^1$ 
function with $\Gamma_i(-\pi) = \Gamma_i(\pi)$, $i=1, 2$. Let
$N_t = ( N_{t,1}, N_{t,2} )$ be independent stationary cyclic Gaussian 
processes on $[-\pi, \pi]$ of the form (\ref{e:N}) with variances
$\sigma^2_j$ for which (\ref{e:C1}) holds. Then, for $\alpha_t \in 
[-\pi, \pi]$ and diffeomorphisms $\varphi_t : [-\pi, \pi] \to [-\pi, \pi]$, 
$\theta_l = - (n+1) \pi/n + 2\pi l / n$, $l=1, \dots, n$, and 
$t = 0, \dots, T$, set
\[
X_t^l = X_t(\theta_l) = \Gamma( \varphi_t( \theta_l - \alpha_t) ) + 
              N_t( \varphi_t( \theta_l - \alpha_t ) ), 
\]
interpreted cyclically modula $2\pi$.
\end{defn}

We set ourselves the goal of estimating $\Gamma$ and the noise variance
parameters $\sigma^2_j$. This is best done in the Fourier domain. For the
moment, assume that all $\alpha_t \equiv 0$ and that each $\varphi_t$ is the 
identity operator.
(We shall return to the issue of estimating these alignment parameters 
in Section~\ref{S:align}). Then Definition~\ref{d:data} reduces to the 
simplified model 
\begin{equation}
\label{e:simple}
X_t(\theta) = \Gamma(\theta) + N_t(\theta),
\end{equation}
which is observed at $X_t^l = X_t(\theta_l)$ 
Under this perfect alignment assumption, $\Gamma$ is a 
$C^1$ rooted parametrisation of the curve of interest with $\Gamma(-\pi) 
= \Gamma(\pi)$. 

It is natural to carry out inference in the Fourier domain. Write 
$\mu_j, \nu_j$ for the Fourier coefficients of $\Gamma$ with components
defined in (\ref{e:FourierGamma}). Let $F_j^t$ and $G_j^t$ be the
random Fourier coefficients of $X_t$ defined by
\begin{equation}
\label{e:Fourier}
\left\{ \begin{array}{lllll}
F_0^t & = & 
\frac{1}{2\pi} \int_{-\pi}^\pi X_t(\theta) d\theta & = & \mu_0 + A_0^t \\
F_j^t & = & \frac{1}{\pi} \int_{-\pi}^\pi X_t(\theta) \cos( j\theta) d\theta 
& = & \mu_j + A_j^t \\ 
G_j^t & = & \frac{1}{\pi} \int_{-\pi}^\pi X_t(\theta) \sin( j\theta) d\theta 
& = & \nu_j + B_j^t
\end{array} \right.
\end{equation}
for $j\in\oN$, where $A_j^t$, $B_j^t$ are as in (\ref{e:N}). Then, the joint 
log likelihood in the Fourier domain of the coefficients up to order 
$J\in\oN$ is 
\begin{eqnarray}
\nonumber
& - & \sum_{t=0}^T \left[
   \log \sigma_0^2  + \sum_{j=1}^J  2 \log \sigma^2_j \right] + \\
\nonumber
& - & \frac{1}{2} \sum_{t=0}^T || f_0^t - \mu_0 ||^2 / \sigma^2_0
-\frac{1}{2} \sum_{t=0}^T \sum_{j=1}^J \left[
|| f_j^t - \mu_j ||^2 + || g_j^t - \nu_j ||^2 \right] / \sigma^2_j 
\end{eqnarray}
upon ignoring constants, where $f_j^t$ and $g_j^t$ are the `observed' 
Fourier coefficients. In practice, one uses a Riemann sum instead of an
integral.

\subsection{Fourier parameter estimation}
\label{S:mle}

In this section we estimate the noise variances $\sigma^2_j$ and the
Fourier coefficients $\mu_j$, $\nu_j$. An estimator for the unknown curve 
$\Gamma$ is obtained by back-transformation.

\begin{res}
\label{t:mle}
The maximum likelihood estimators
\[
\left\{ \begin{array}{lll}
\hat \mu_j & = & \frac{1}{T+1} \sum_{t=0}^T F_j^t, \quad j \in \oN \cup \{0 \}\\
\hat \nu_j & = & \frac{1}{T+1} \sum_{t=0}^T G_j^t, \quad j \in \oN
\end{array} \right.
\]
for the model (\ref{e:simple}) of Definition~\ref{d:data} are mutually 
independent and consistent. They are normally distributed with mean vectors 
$\mu_j$ and $\nu_j$, respectively, and covariance matrix 
\(
\sigma_j^2 I_2 / (T+1)
\), 
writing $I_2$ for the $2\times 2$ identity matrix. For $j\in\oN$, the maximum 
likelihood estimators
\[
\hat \sigma^2_j = \frac{1}{4(T+1)} \sum_{t=0}^T \left[
|| F_j^t - \hat \mu_j ||^2 + || G_j^t - \hat \nu_j ||^2
\right]
\]
are consistent. Moreover, $4(T+1) \hat \sigma_j^2 / \sigma_j^2$ is $\chi^2$ 
distributed with $4 T$ degrees of freedom. The estimator
\[
\hat \sigma^2_0 = \frac{1}{2(T+1)} \sum_{t=0}^T || F_0^t - \hat \mu_0 ||^2
\]
is consistent and $2(T+1) \hat \sigma_0^2 / \sigma_0^2$ is $\chi^2$ 
distributed with $2 T$ degrees of freedom. 
\end{res}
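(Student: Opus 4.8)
The plan is to treat the three groups of parameters in turn, exploiting the fact that in the Fourier domain the log likelihood displayed above decouples completely across orders $j$ and, within each order, into independent scalar Gaussian families. First I would obtain the mean estimators by differentiating the log likelihood with respect to $\mu_j$ and $\nu_j$. Since these parameters enter only through the quadratic terms $|| f_j^t - \mu_j ||^2 / \sigma_j^2$, the first-order conditions reduce to $\sum_t ( f_j^t - \mu_j ) = 0$ and decouple from the variance parameters; this immediately yields the sample means $\hat\mu_j, \hat\nu_j$ and shows they do not depend on the $\sigma_j^2$.

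Next I would establish their distribution. Writing $\hat\mu_j = \mu_j + (T+1)^{-1}\sum_t A_j^t$ exhibits $\hat\mu_j$ as an affine function of the independent Gaussian coefficients $A_j^t$, which are $N(0, \sigma_j^2 I_2)$, so $\hat\mu_j$ is Gaussian with mean $\mu_j$ and covariance $\sigma_j^2 I_2/(T+1)$, and analogously for $\hat\nu_j$. Mutual independence across $j$, and between $\hat\mu_j$ and $\hat\nu_j$, follows from the assumed independence of all the Fourier coefficients $A_j^t, B_j^t$; consistency is immediate since the covariance vanishes as $T\to\infty$.

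For the variance estimators I would profile the log likelihood at the mean estimators and differentiate in $\sigma_j^2$, whose first-order condition gives exactly the stated $\hat\sigma_j^2$ and $\hat\sigma_0^2$ (the factors $4(T+1)$ and $2(T+1)$ arising from the $2\log\sigma_j^2$ and $\log\sigma_0^2$ terms, i.e.\ from four versus two scalar components per order). The distributional claim is the crux. I would rewrite each residual as $F_{j,i}^t - \hat\mu_{j,i} = A_{j,i}^t - \bar A_{j,i}$, and similarly for the sine part, reducing $4(T+1)\hat\sigma_j^2/\sigma_j^2$ to a sum of four coordinatewise terms of the form $\sigma_j^{-2}\sum_t ( A_{j,i}^t - \bar A_{j,i} )^2$. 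Each such term is the scaled sum of squared deviations of $T+1$ i.i.d.\ $N(0,\sigma_j^2)$ variables about their own sample mean, which by the classical Gaussian sample-variance result (Cochran's theorem) is $\chi^2$ with $T$ degrees of freedom; the four terms are mutually independent because the underlying families are, so their sum is $\chi^2_{4T}$. The order $j=0$ case has only the two cosine components, giving $\chi^2_{2T}$. Consistency of $\hat\sigma_j^2$ then follows from $\EE[\chi^2_{4T}] = 4T$ and $\mathrm{Var}(\chi^2_{4T}) = 8T$, so that its mean tends to $\sigma_j^2$ and its variance to zero.

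The main obstacle is the bookkeeping in the chi-squared step: one must verify that exactly one degree of freedom is lost per coordinate (because the sample mean $\hat\mu_j$ is subtracted), that the independence of the cosine and sine coefficients and of the two spatial components suffices to make the four (respectively two) quadratic forms mutually independent, and that the orthogonality underlying Cochran's theorem applies separately within each order. Once the residuals are expressed as deviations of the $A$ and $B$ coefficients from their sample means, this is standard, but it is the only place where genuine care is needed.
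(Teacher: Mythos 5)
Your proposal is correct and is in substance the same argument as the paper's: the paper's own proof simply cites the estimator expressions and their distributions as ``classic results from multivariate statistics,'' invokes the law of large numbers and the L\'evy--Cram\`er continuity theorem for consistency, and gives the same independence argument you do (each $F_j^t$ depends only on $A_j^t$, each $G_j^t$ only on $B_j^t$). What you have done is fill in those cited classics explicitly --- likelihood differentiation for the estimator formulas, the affine-Gaussian computation for the means, Cochran's theorem with one degree of freedom lost per scalar coordinate (giving $4(T+1)-4=4T$, respectively $2(T+1)-2=2T$, degrees of freedom), and chi-squared moment bounds in place of the continuity-theorem appeal for consistency --- so the bookkeeping you flag as the crux is exactly right and matches the stated $\chi^2_{4T}$ and $\chi^2_{2T}$ claims.
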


\begin{proof}
The expression for and distribution of the maximum likelihood estimators 
are classic results from multivariate statistics \cite{ChatColl80}. 
The consistency for $T\to\infty$ follows from the law of large numbers
for the mean and the L\'evy--Cram\`er continuity theorem for the variance.

To show independence, fix some finite $J$. Now $F_j^t$ depends only on $A_j^t$,
$G_j^t$ only on $B_j^t$. Hence the random vector consisting of the components
of $F_j^t$, $j\in\{ 0, \dots, J \}$, and $G_j^t$, $j\in \{ 1, \dots, J\}$, for 
all $t=0,\dots, T$ is mutually independent. Since $J$ is arbitrary, the proof 
is complete.
\end{proof}

Transformation to the spatial domain gives an estimator for the unknown 
curve $\Gamma$. Indeed, set
\begin{equation}
\label{e:Gamma}
\hat \Gamma(\theta) = \hat \mu_0 + 
\sum_{j=1}^J \left[ \hat \mu_j \cos(j\theta) + \hat \nu_j \sin(j\theta) \right],
\end{equation}
where $J>0$ is a cut-off value and $\theta \in [-\pi,\pi]$.

\begin{thm}
\label{t:Gamma}
In the model (\ref{e:simple}) of Definition~\ref{d:data}, the estimator 
(\ref{e:Gamma}) is a stationary cyclic Gaussian process with independent 
components. Its mean vector is the Fourier representation
\(
\mu_0 + \sum_{j=1}^J \left[ \mu_j \cos(j\theta) + \nu_j \sin(j\theta) \right]
\) 
of $\Gamma$ truncated at $J$.
The covariance function of both components of (\ref{e:Gamma}) is given by
\(
 \rho_J(\theta) / (T+1)
\)
where $\rho_J(\theta)$ is the truncated covariance function 
$\sum_{j=0}^J \sigma_j^2 \cos( j \theta )$. 
The integrated squared error can be written as
\[
\frac{1}{\pi} \int_{-\pi}^\pi || \hat \Gamma(\theta) - \Gamma(\theta) ||^2 
d\theta =  
 \sum_{j=J+1}^\infty \left[ || \mu_j ||^2 + || \nu_j ||^2 \right]
+ Z_J
\]
where $Z_J = 
 \left( 2\sigma^2_0 Z_0 + \sum_{j=1}^J \sigma^2_j Z_j \right) / (T+1)$ 
and the $Z_j$ are independent $\chi^2$ distributed random variables
with four degrees of freedom for $j\geq 1$ and two for $j=0$.
\end{thm}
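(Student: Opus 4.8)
The plan is to read off each assertion from the distributional facts about the estimators $\hat\mu_j$, $\hat\nu_j$ established in Lemma~\ref{t:mle}, and to treat the integrated squared error via Parseval's identity.

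First I would observe that $\hat\Gamma(\theta)$ is, for each fixed $\theta$, a finite linear combination of the jointly Gaussian vectors $\hat\mu_0,\dots,\hat\mu_J,\hat\nu_1,\dots,\hat\nu_J$; since any finite linear combination of jointly Gaussian variables is again Gaussian, $\hat\Gamma$ is a Gaussian process. The two components $\hat\Gamma_1$ and $\hat\Gamma_2$ depend respectively on the first and second coordinates of these vectors, and those coordinates are independent because each estimator has diagonal covariance $\sigma_j^2 I_2/(T+1)$; hence the components are independent. The mean vector follows at once by linearity of expectation from $\EE\hat\mu_j=\mu_j$ and $\EE\hat\nu_j=\nu_j$, giving precisely the truncation at $J$ of the Fourier representation of $\Gamma$.

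Next I would compute the covariance of, say, the first component by centering and exploiting the mutual independence of the $\hat\mu_j$, $\hat\nu_j$: only the diagonal terms survive, and after applying the addition formula $\cos(j\theta)\cos(j\eta)+\sin(j\theta)\sin(j\eta)=\cos(j(\theta-\eta))$ the covariance collapses to $(T+1)^{-1}\sum_{j=0}^J \sigma_j^2\cos(j(\theta-\eta))$. This simultaneously exhibits stationarity, since the covariance depends only on $\theta-\eta$, and identifies it as $\rho_J(\theta)/(T+1)$.

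The heart of the proof is the integrated squared error. I would write $\hat\Gamma-\Gamma$ as a single Fourier series whose order-$0$ coefficient is $\hat\mu_0-\mu_0$, whose order-$j$ coefficients for $1\le j\le J$ are $\hat\mu_j-\mu_j$ and $\hat\nu_j-\nu_j$, and whose order-$j$ coefficients for $j>J$ are simply $-\mu_j$ and $-\nu_j$, since $\hat\Gamma$ is truncated. Parseval's identity (\ref{e:Parseval}) then splits the error into the deterministic tail $\sum_{j=J+1}^\infty(\|\mu_j\|^2+\|\nu_j\|^2)$ and the random remainder $2\|\hat\mu_0-\mu_0\|^2+\sum_{j=1}^J(\|\hat\mu_j-\mu_j\|^2+\|\hat\nu_j-\nu_j\|^2)$. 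The main point to handle carefully is the bookkeeping of the Parseval weights, in particular the factor $2$ on the order-$0$ term, together with the matching of degrees of freedom: $\|\hat\mu_0-\mu_0\|^2$ equals $\sigma_0^2/(T+1)$ times a $\chi^2_2$ variable from its two coordinates, whereas for $j\ge1$ the sum $\|\hat\mu_j-\mu_j\|^2+\|\hat\nu_j-\nu_j\|^2$ equals $\sigma_j^2/(T+1)$ times a $\chi^2_4$ variable from its four independent centred coordinates. Pulling out the scalars yields $Z_J=(T+1)^{-1}\big(2\sigma_0^2 Z_0+\sum_{j=1}^J\sigma_j^2 Z_j\big)$, and the independence of the $Z_j$ is inherited from the mutual independence of the estimators across orders asserted in Lemma~\ref{t:mle}.
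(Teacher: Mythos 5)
Your proposal is correct and follows essentially the same route as the paper's proof: Gaussianity, independence of components and the mean from Lemma~\ref{t:mle}, the cosine addition formula for the stationary covariance $\rho_J(\theta-\eta)/(T+1)$, and Parseval's identity (\ref{e:Parseval}) applied to $\hat\Gamma-\Gamma$ with the truncation supplying the deterministic tail and the $\chi^2_2$/$\chi^2_4$ terms supplying $Z_J$. The only difference is that you spell out slightly more explicitly the coordinate-wise argument for independence of the two components and the Parseval weight bookkeeping, which the paper leaves implicit.
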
 

As a simple corollary, the expected integrated squared error is 
\begin{equation}
\label{e:emse}
 \sum_{j=J+1}^\infty \left[ || \mu_j ||^2 + || \nu_j ||^2 \right] +
\frac{4}{T+1} \sum_{j=0}^J \sigma^2_j .
\end{equation}
Note that one has to strike a balance between bias and variance. Indeed,
as $J$ increases, the first term of (\ref{e:emse}) decreases, the second 
one increases. In other words, a decrease in bias leads to an increase 
in variance. Thus, in practice, $J$ has to be chosen carefully, as too 
large a value might result in over-fitting, whereas too small a value 
could lead to over-smoothing.

\

\noindent
\begin{proof}
By Lemma~\ref{t:mle}, (\ref{e:Gamma}) is a Gaussian process with 
independent components and mean function as claimed. Since, by the same 
Lemma, the $\hat \mu_j$ and $\hat \nu_j$ are independent, the covariance 
function of the components $\hat \Gamma_i$, $i=1, 2$, is 
\begin{eqnarray*}
{\rm Cov}\left( \hat\Gamma_i(\theta), \hat \Gamma_i(\eta) \right) & = &
\frac{1}{T+1} \sum_{j=0}^J \sigma_j^2 \left[
  \cos( j\theta)  \cos( j\eta) +  \sin( j\theta)  \sin( j\eta ) 
\right]
\\
& = &
\frac{1}{T+1} \sum_{j=0}^J \sigma_j^2 \cos ( j( \eta - \theta )),
\end{eqnarray*}
a stationary function. By Parseval's identity,
\[
\frac{1}{\pi} \int_{-\pi}^\pi || \hat \Gamma(\theta) - \Gamma(\theta) ||^2 
d\theta =  2 || \hat \mu_0 - \mu_0 ||^2 + 
\sum_{j=1}^\infty \left[ 
  || \hat \mu_j - \mu_j ||^2 +  || \hat \nu_j - \nu_j ||^2 
\right].
\]
The truncation at $J$ of (\ref{e:Gamma}) amounts to setting $\hat\mu_j$ 
and $\hat\nu_j$ to zero for $j>J$. For $j\leq J$,  by Lemma~\ref{t:mle}, 
the components of $\hat \mu_j - \mu_j$ and those of $\hat \nu_j - \nu_j$ 
are independent, normally distributed random variables with variance 
$\sigma^2_j / (T+1)$. Hence, for $j \in \{ 1, \dots, J \}$, 
$|| \hat \mu_j - \mu_j ||^2 +  || \hat \nu_j - \nu_j ||^2 $ divided
by  $\sigma_j^2 / (1+T)$ is $\chi^2$ distributed with four degrees of 
freedom. The random variable $(T + 1) || \hat \mu_0 - \mu_0 ||^2 /
\sigma^2_0$ is $\chi^2$ distributed with two degrees of freedom. 
\end{proof}

To conclude the section, let us turn to asymptotics.

\begin{thm}
Consider the estimator (\ref{e:Gamma}) in the model (\ref{e:simple}) of 
Definition~\ref{d:data}. The integrated squared error 
\[
\frac{1}{\pi} \int_{-\pi}^\pi || \hat \Gamma(\theta) - \Gamma(\theta) ||^2 
d\theta  
\to 
\sum_{j=J+1}^\infty \left[ || \mu_j ||^2 + || \nu_j ||^2 \right]
\]
almost surely as $T\to\infty$.
\end{thm}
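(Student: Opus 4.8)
The plan is to leverage the explicit decomposition of the integrated squared error established in Theorem~\ref{t:Gamma}. That result already writes the error as a deterministic bias term plus a random term $Z_J$, so the entire task reduces to showing $Z_J \to 0$ almost surely as $T\to\infty$.

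First I would recall that, by Theorem~\ref{t:Gamma},
\[
\frac{1}{\pi} \int_{-\pi}^\pi || \hat \Gamma(\theta) - \Gamma(\theta) ||^2 d\theta
= \sum_{j=J+1}^\infty \left[ || \mu_j ||^2 + || \nu_j ||^2 \right] + Z_J,
\]
where the first (bias) term does not depend on $T$, and
\(
Z_J = \left( 2\sigma^2_0 Z_0 + \sum_{j=1}^J \sigma^2_j Z_j \right)/(T+1)
\)
with the $Z_j$ being $\chi^2$ random variables (four degrees of freedom for $j\geq 1$, two for $j=0$). Since $J$ is fixed, $Z_J$ is a fixed finite linear combination of $\chi^2$ variables divided by $T+1$. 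The cleanest route, however, is to argue directly at the level of the estimators rather than through the single random variable $Z_J$ whose distribution already absorbs the dependence on $T$.

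Second, I would return to the definitions $\hat\mu_j = \frac{1}{T+1}\sum_{t=0}^T F_j^t$ and $\hat\nu_j = \frac{1}{T+1}\sum_{t=0}^T G_j^t$. For each fixed $j$ and each component, these are sample means of independent and identically distributed random variables (the $F_j^t$ across $t$, which share mean $\mu_j$ and finite variance $\sigma_j^2$ by Lemma~\ref{t:mle}). The strong law of large numbers then gives $\hat\mu_j \to \mu_j$ and $\hat\nu_j \to \nu_j$ almost surely as $T\to\infty$, so each summand $|| \hat\mu_j - \mu_j ||^2 + || \hat\nu_j - \nu_j ||^2$ tends to zero almost surely. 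Taking the intersection of the finitely many (for $j = 0, \dots, J$) almost-sure events, the whole variance contribution vanishes on an event of probability one, which is exactly $Z_J \to 0$ almost surely.

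The only subtle point is measure-theoretic rather than analytic: I must ensure the convergence holds \emph{simultaneously} for all the relevant indices on a single probability-one event, which follows because a finite intersection of probability-one events again has probability one. Substituting $Z_J \to 0$ into the decomposition above leaves precisely the deterministic bias term $\sum_{j=J+1}^\infty [ || \mu_j ||^2 + || \nu_j ||^2 ]$, completing the argument. I do not expect any genuine obstacle here; the main work was already carried out in Theorem~\ref{t:Gamma}, and the remaining step is a direct application of the strong law of large numbers to finitely many sample means.
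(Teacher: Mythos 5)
Your proof is correct, but it takes a genuinely different route from the paper's. The paper stays entirely at the level of the distributional decomposition from Theorem~\ref{t:Gamma}: writing $Z_J = Z_J(T)$, it bounds $\PP(|Z_J(T)| \geq \epsilon)$ by a tail probability of a $\chi^2_{4J+2}$ variable scaled by $c_J = \max\{2\sigma_0^2, \sigma_1^2, \dots, \sigma_J^2\}$, applies the Chernoff bound to show these tail probabilities are summable over $T$, and concludes by the Borel--Cantelli lemma. You instead unwind $Z_J$ back into the estimators, note that $\hat\mu_j$ and $\hat\nu_j$ are sample means of i.i.d.\ random vectors with mean $\mu_j$ (resp.\ $\nu_j$) and finite variance, invoke Kolmogorov's strong law of large numbers componentwise, and intersect the finitely many probability-one events. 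Both arguments are sound, and your handling of the one subtle point (simultaneity of the a.s.\ convergence over $j = 0, \dots, J$) is right. The trade-off: your SLLN argument is shorter and more elementary, and it is in fact the same device the paper itself uses later for the discretised estimator (where it cites Kolmogorov's strong law to get $Z_{J,n} \to 2c_{0,n} + \sum_j c_{j,n}$); it does, however, rely on the natural coupling in which the sample for horizon $T+1$ extends the sample for horizon $T$ on a common probability space. The paper's Borel--Cantelli route needs only the marginal distribution of $Z_J(T)$ for each $T$ --- no assumption at all on how the variables are coupled across $T$ --- and its Chernoff estimate additionally yields an explicit, exponentially decaying bound on the deviation probabilities, i.e.\ quantitative rate information that the bare strong law does not provide.
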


It is worth noting that the limit depends solely on the ignored Fourier
coefficients of $\Gamma$.

\

\noindent
\begin{proof}
Recall the notation of Theorem~\ref{t:Gamma}. To prove strong convergence of 
\(
Z_J = Z_J(T)
\)
to $0$ as $T\to\infty$, we use the Borel--Cantelli lemma. Indeed,
\begin{eqnarray*}
\sum_{T=1}^\infty \PP( |Z_J(T) - 0 | \geq \epsilon ) & = & 
\sum_{T=1}^\infty \PP\left( | 2\sigma_0^2 Z_0 + \sum_{j=1}^J \sigma^2_j Z_j | 
\geq (T+1) \epsilon \right) \\
& \leq & 
\sum_{T=1}^\infty \PP(  c_J \chi^2_{4J+2}  \geq (T+1) \epsilon ),
\end{eqnarray*}
where $c_J = \max \{ 2\sigma_0^2, \sigma_1^2, \dots, \sigma_J^2 \}$.
For $T$ large enough, $z(T) = (T+1) \epsilon / c_J  > 4J+2$, and, for
such $T$, the tail probability satisfies
\[
\PP(  \chi^2_{4J+2}  \geq (T+1) \epsilon / c_J ) \leq 
\left( \frac{z(T)}{4J+2} \exp[ 1 - z(T) / (4J+2) ]
\right)^{2J+1}
\]
by the Chernoff bound. Consequently,
\[
\sum_{T=1}^\infty \PP( |Z_J(T) - 0 | \geq \epsilon ) < \infty,
\]
and the strong convergence of $Z_J(T)$ to $0$ follows.
\end{proof}

\subsection{Discretisation}

In practice, the Fourier coefficients (\ref{e:Fourier}) are computed
using a Riemann sum
\begin{equation}
\label{e:Fourier-discrete}
\left\{ \begin{array}{lll}
F_{0,n}^t & = & 
\frac{1}{n} \sum_{l=1}^n X_t^l  
 =  \frac{1}{n} \sum_{l=1}^n \left[ \Gamma(\theta_l) +  N_t(\theta_l) \right] \\
F_{j,n}^t & = & \frac{2}{n} \sum_{l=1}^n X_t^l \cos(j\theta_l) 
 =  \frac{2}{n} \sum_{l=1}^n \left[ \Gamma(\theta_l) \cos(j\theta_l) + 
 N_t(\theta_l) \cos(j\theta_l) \right] \\
G_{j,n}^t & = & \frac{2}{n} \sum_{l=1}^n X_t^l \sin(j\theta_l)
 =  \frac{2}{n} \sum_{l=1}^n \left[ \Gamma(\theta_l) \sin(j\theta_l) + 
 N_t(\theta_l) \sin(j\theta_l) \right] \\
\end{array} \right.
\end{equation}
for $j\in\oN$ and $\theta_l = -(n+1)\pi/n + 2\pi l/n$, $l=1, \dots, n$.
We shall write $\mu_{j,n}$, $\nu_{j,n}$ for the deterministic parts of 
(\ref{e:Fourier-discrete}), $A_{j,n}^t$ and $B_{j,n}^t$ for the stochastic 
ones. As before, $n\geq 3$ is odd. 

In special cases, the Riemann approximation is exact and corresponds to
a discrete Fourier transform. This is the content of the next result.
Its proof will be used later on in this section.

\begin{res}
\label{DFT}
Suppose that the Fourier transforms of $\Gamma$ and $N_t$ vanish from
order $J+1$ onwards where $J \leq (n-1)/2$, $n$ odd, and 
$\theta_l = - (n+1) \pi/n + 2\pi l/n$, 
$l=1, \dots, n$. Then, for $j\in \{0, \dots, J\}$, $F_j^t = \mu_j + A_j^t$
and $G_j^t = \nu_j + B_j^t$.
\end{res}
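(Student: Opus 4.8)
The plan is to show that under the stated band-limiting hypothesis the Riemann sums (\ref{e:Fourier-discrete}) reproduce the exact Fourier coefficients, so that $F_{j,n}^t = \mu_j + A_j^t = F_j^t$ and $G_{j,n}^t = \nu_j + B_j^t = G_j^t$ for $j \in \{0,\dots,J\}$; the substance is the \emph{exactness} of the discrete orthogonality relations at the nodes $\theta_l$. First I would use the assumption that the Fourier coefficients of $\Gamma$ and of $N_t$ vanish beyond order $J$ to write the integrand as a trigonometric polynomial of degree at most $J$,
\[
X_t(\theta) = (\mu_0 + A_0^t) + \sum_{k=1}^J \left[ (\mu_k + A_k^t) \cos(k\theta) + (\nu_k + B_k^t) \sin(k\theta) \right],
\]
with $A_k^t, B_k^t$ as in (\ref{e:N}). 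Substituting this into $F_{j,n}^t$ and $G_{j,n}^t$ and interchanging the two finite sums reduces the whole computation to the node sums $\sum_{l=1}^n \cos(k\theta_l)\cos(j\theta_l)$, $\sum_{l=1}^n \sin(k\theta_l)\sin(j\theta_l)$ and $\sum_{l=1}^n \cos(k\theta_l)\sin(j\theta_l)$ for $j,k \in \{0,\dots,J\}$, together with the bare sums $\sum_l \cos(j\theta_l)$ and $\sum_l \sin(j\theta_l)$.

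The heart of the argument is to evaluate these sums. Writing $\theta_l = -\pi + (2l-1)\pi/n$ and turning the products into sums of cosines and sines, each reduces to the real or imaginary part of a geometric series $\sum_{l=1}^n e^{im\theta_l} = (-1)^m e^{-im\pi/n} \sum_{l=1}^n (e^{2\pi i m/n})^l$ with $m = j \pm k$. Since $e^{2\pi i m/n}$ is an $n$-th root of unity, this series equals $n$ when $n$ divides $m$ and vanishes otherwise. Here the hypotheses enter decisively: the bound $J \leq (n-1)/2$ gives $|j - k| \leq J < n$ and $j + k \leq 2J \leq n-1 < n$ (the oddness of $n$ being what permits $J$ to reach $(n-1)/2$), so $m = j \pm k$ is a multiple of $n$ only when $m = 0$. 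This excludes aliasing and yields $\sum_l \cos(k\theta_l)\cos(j\theta_l) = \sum_l \sin(k\theta_l)\sin(j\theta_l) = (n/2)\,\delta_{jk}$ for $j,k \geq 1$, $\sum_l \cos(k\theta_l)\sin(j\theta_l) = 0$, and $\sum_l \cos(j\theta_l) = \sum_l \sin(j\theta_l) = 0$ for $1 \leq j \leq J$. I expect this index bookkeeping — keeping $j \pm k$ strictly inside $(-n,n)$ so that no high frequency aliases onto a low one — to be the only delicate point; dropping the range condition would reintroduce the boundary case $j+k=n$, at which the node sum $\sum_l \cos((j+k)\theta_l)$ no longer vanishes and orthogonality fails.

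With the relations in hand the conclusion follows by inspection. For $j \geq 1$ every term in $F_{j,n}^t = (2/n)\sum_l X_t(\theta_l)\cos(j\theta_l)$ vanishes except the one indexed by $k = j$, leaving $(2/n)(\mu_j + A_j^t)(n/2) = \mu_j + A_j^t$, while the constant term drops because $\sum_l \cos(j\theta_l) = 0$; the same computation against $\sin(j\theta_l)$ gives $G_{j,n}^t = \nu_j + B_j^t$. For $j = 0$ I would treat $F_{0,n}^t = (1/n)\sum_l X_t(\theta_l)$ directly, where every oscillatory term averages to zero and $F_{0,n}^t = \mu_0 + A_0^t$. Comparing with (\ref{e:Fourier}) shows these discrete transforms coincide with the exact coefficients, which is the assertion; the orthogonality relations derived en route are precisely what the remainder of the section reuses.
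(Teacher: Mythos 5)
Your proposal is correct and takes essentially the same approach as the paper: both arguments reduce the claim to the discrete orthogonality of the trigonometric system at the nodes $\theta_l$, which holds precisely because the no-aliasing bounds $|j-k|<n$ and $j+k<n$ follow from $J \leq (n-1)/2$. The only cosmetic differences are that you evaluate the exponential node sums $\sum_{l=1}^n e^{im\theta_l}$ as geometric series over $n$-th roots of unity where the paper invokes the Lagrange identities for the same computation, and that you write out the final substitution of the band-limited expansion of $X_t$ into the Riemann sums, a step the paper leaves implicit after establishing the orthogonality relations.
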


\begin{proof} Recall the Lagrange identities. For $\alpha \in (0,2\pi)$,
\begin{eqnarray*}
\sum_{l=1}^n \sin(l\alpha) & = & \frac{1}{2} \cot(\frac{\alpha}{2})
- \frac{ \cos( (n+\frac{1}{2}) \alpha )}{2 \sin(\frac{\alpha}{2})}; \\
\sum_{l=1}^n \cos(l\alpha) & = & - \frac{1}{2}
+ \frac{\sin( (n+\frac{1}{2}) \alpha )}{2 \sin(\frac{\alpha}{2})} .
\end{eqnarray*}

First note that 
\(
\exp\{ i j \theta_l \} 
\)
$l=1, \dots, n$ and $j = -(n-1)/2 , \dots, (n-1)/2$ is an orthogonal family.
To see this, take $j_1$, $j_2$ and compute the inner product
\[
\sum_{l=1}^n e^{i j_1 \theta_l} e^{-i j_2 \theta_l} = 
\sum_{l=1}^n e^{i (j_1-j_2) \theta_l} = 
\sum_{l=1}^n \left[ \cos( (j_1 - j_2)\theta_l ) + i \sin( (j_1-j_2)\theta_l) 
\right] .
\]
Since $(j_1 - j_2) \theta_l = - (j_1 - j_2) \pi (n+1)/n + 2 \pi (j_1-j_2) l/n$,
we may use the Lagrange identities with, for $j_1 > j_2$,
$\alpha = 2 \pi (j_1-j_2) / n$ provided $\alpha \in (0, 2\pi)$, that is, 
$j_1 \neq j_2$ and $|j_1-j_2| < n$.
The latter is true by assumption. Writing $j\neq 0$ for $|j_1-j_2|$ we get
\begin{eqnarray*}
\sum_{l=1}^n \sin( j\theta_l) & = & 
 \sin( - \frac{ n + 1}{n} j \pi ) \sum_{l=1}^n \cos( l \frac{ 2 \pi j }{n} ) + 
 \cos( - \frac{ n + 1}{n} j \pi ) \sum_{l=1}^n \sin( l \frac{ 2 \pi j }{n} ) = 0; 
\\
\sum_{l=1}^n \cos( j\theta_l) & = & 
 \cos( - \frac{ n + 1}{n} j \pi ) \sum_{l=1}^n \cos( l \frac{ 2 \pi j }{n} ) - 
 \sin( - \frac{ n + 1}{n} j \pi ) \sum_{l=1}^n \sin( l \frac{ 2 \pi j }{n} ) = 0.
\end{eqnarray*}
When $j=0$, that is $j_1 = j_2$, clearly $\sum_l \cos(j\theta_l) = n$ and
$\sum_l \sin(j\theta_l) = 0$. For negative $j$, analogous computations can
be done so that the orthogonality proof is complete. 

To conclude the proof, use the identities $\cos x = ( e^{ix} + e^{-ix} ) / 2$
and $\sin x = ( e^{ix} - e^{-ix} ) / 2i$ to derive that for 
$j_1, j_2 \in \{ 1, \dots, J \}$,
\begin{equation}
\label{e:basis}
\sum_{l=1}^n \cos(j_1 \theta_l) \cos( j_2 \theta_l)  =  
\sum_{l=1}^n \sin(j_1 \theta_l) \sin( j_2 \theta_l)  =  
\frac{n}{2} \, {\bf 1} \{ j_1 = j_2  \} 
\end{equation}
and
\(
\sum_{l=1}^n \cos(j_1 \theta_l) \sin( j_2 \theta_l)  =  0.
\)
\end{proof}


To estimate $\Gamma$, transform back from the Fourier to the spatial domain. 
Again, we assume $J < n/2$ to make sure that the number of Fourier parameters 
to estimate is not greater than the number of observed boundary points. 
Indeed, set
\begin{eqnarray}
\nonumber
\widehat{\Gamma_n}(\theta) & = & 
\frac{1}{(T+1) n} \sum_{t=0}^T \sum_{l=1}^n X_t^l
\\
\nonumber
& + &
\frac{2}{(T+1)n} \sum_{t=0}^T \sum_{l=1}^n X_t^l \sum_{j=1}^J \left[
\cos(j\theta_l) \cos(j\theta) + \sin(j\theta_l) \sin(j\theta) 
\right] \\
& = & 
\label{e:smooth}
\frac{1}{(T+1)} \sum_{t=0}^T \sum_{l=1}^n X_t^l  \left[
\frac{1}{n} + \frac{2}{n} \sum_{j=1}^J \cos( j(\theta - \theta_l) )
\right].
\end{eqnarray}
We shall use the notation 
$S_l(\theta) = 1/n + 
2 \sum_{j=1}^J \cos( j(\theta - \theta_l) )/n$ for the `smoothing'.

\begin{thm}
\label{t:discrete}
The estimator (\ref{e:smooth}) in model (\ref{e:simple}) is a stationary 
cyclic Gaussian process with independent components. Its mean vector is 
the Riemann approximation to the Fourier representation
\[
\sum_{l=1}^n \Gamma(\theta_l) S_l(\theta) =
\mu_{0,n} + \sum_{j=1}^J \left[
  \mu_{j,n} \cos(j\theta) + \nu_{j,n} \sin(j\theta) \right]
\]
of $\Gamma$ truncated at $J$. 
Provided $J < n/2$ the covariance function of both components of
(\ref{e:smooth}) is given by
\(
 \rho_{J,n}(\theta) / (T+1)
\)
where $\rho_{J,n}$ is the truncated covariance function
\(
\sum_{j=0}^J \sigma^2_{j,n} \cos( j \theta  )
\)
based on the Riemann approximations 
$\sigma^2_{j,n} = 2 \sum_l \rho(\theta_l) \cos(j \theta_l) / n$
for $j\geq 1$ and $\sigma^2_{0,n} =\sum_l \rho(\theta_l) / n$. 
\end{thm}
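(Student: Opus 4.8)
The plan is to treat the three assertions in turn, with the covariance identity as the only substantial one. Gaussianity and independence of the two components are immediate: by (\ref{e:smooth}) each component $\widehat{\Gamma_{n,i}}(\theta)$ is a fixed linear combination $\frac{1}{T+1}\sum_{t}\sum_{l} X_{t,i}^l\,S_l(\theta)$ of the jointly Gaussian variables $X_{t,i}^l=\Gamma_i(\theta_l)+N_{t,i}(\theta_l)$, so any finite collection of values is jointly Gaussian; and since the smoothing weights $S_l(\theta)$ are scalars applied identically to both coordinates while the noise components $N_{t,1},N_{t,2}$ are independent, the two coordinate processes are independent. For the mean, $\EE X_t^l=\Gamma(\theta_l)$, whence $\EE\widehat{\Gamma_n}(\theta)=\sum_l \Gamma(\theta_l)S_l(\theta)$; expanding $\cos(j(\theta-\theta_l))=\cos(j\theta)\cos(j\theta_l)+\sin(j\theta)\sin(j\theta_l)$ and collecting terms reproduces $\mu_{0,n}+\sum_{j=1}^J[\mu_{j,n}\cos(j\theta)+\nu_{j,n}\sin(j\theta)]$ directly from the definitions of $\mu_{j,n},\nu_{j,n}$ as the deterministic parts of (\ref{e:Fourier-discrete}).

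For the covariance I would first use that the $N_t$ are independent across $t$ and centred, so only the diagonal $t=t'$ survives and, for $i=1,2$,
\[
{\rm Cov}(\widehat{\Gamma_{n,i}}(\theta),\widehat{\Gamma_{n,i}}(\eta))=\frac{1}{T+1}\sum_{l=1}^n\sum_{l'=1}^n \rho(\theta_l-\theta_{l'})\,S_l(\theta)\,S_{l'}(\eta).
\]
The cleanest route is to write the smoothing kernel as a Dirichlet sum $S_l(\theta)=\frac{1}{n}\sum_{|j|\le J}e^{ij(\theta-\theta_l)}$ and insert the spectral expansion $\rho(\phi)=\sum_{m\in\oZ}\tilde\sigma_m e^{im\phi}$, where $\tilde\sigma_0=\sigma_0^2$ and $\tilde\sigma_m=\sigma_{|m|}^2/2$ for $m\ne0$ (the density $m(\cdot)$ of Subsection~\ref{S:noise}). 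Everything then reduces to the discrete orthogonality already established in the proof of Lemma~\ref{DFT}, namely $\sum_{l=1}^n e^{ip\theta_l}=n\,\1{p\equiv 0\ (\bmod\ n)}$ for integer $p$, using oddness of $n$ to kill the boundary phase.

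Summing over $l,l'$ forces $m\equiv j$ and $m\equiv -k\ (\bmod\ n)$ for the surviving frequencies $|j|,|k|\le J$; here the hypothesis $J<n/2$ is essential, as it makes $|j+k|<n$ and hence collapses the double sum to $k=-j$ with no aliasing between distinct retained modes. What remains is $\sum_{|j|\le J}e^{ij(\theta-\eta)}\sum_{m\equiv j\,(\bmod\ n)}\tilde\sigma_m$, and the inner, aliased, sum is exactly the discrete Fourier coefficient $\frac1n\sum_l\rho(\theta_l)e^{-ij\theta_l}$. By symmetry of the grid about the origin this coefficient is real and equals $\sigma^2_{0,n}$ for $j=0$ and $\tfrac12\sigma^2_{j,n}$ for $j\ne0$, so the whole expression folds into $\sum_{j=0}^J\sigma^2_{j,n}\cos(j(\theta-\eta))=\rho_{J,n}(\eta-\theta)$, giving stationarity and the claimed form after dividing by $T+1$.

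The main obstacle is precisely this aliasing bookkeeping: one must recognise that sampling at the $n$ points folds every frequency $m$ onto its residue modulo $n$, so that the sampled covariance does not see the true truncated spectrum $\{\sigma_j^2\}$ but its aliased Riemann counterpart $\{\sigma^2_{j,n}\}$, and that $J<n/2$ is exactly the condition preventing two retained modes from being identified with one another. A secondary point to verify is that the grid $\{\theta_l\}$ is symmetric about the origin, with $\theta_{(n+1)/2}=0$, so that $\sum_l\rho(\theta_l)\sin(j\theta_l)=0$ and the discrete coefficients are real, matching the cosine-only definition of $\sigma^2_{j,n}$.
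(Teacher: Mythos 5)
Your proposal is correct, and all three conclusions (Gaussianity/independence, mean, covariance) are established soundly; however, your covariance computation takes a genuinely different route from the paper's. The paper never expands $\rho$ spectrally: it evaluates the inner sum $\sum_{l}\rho(\theta_m-\theta_l)S_l(\theta)$ directly, using the cosine-addition formula, the antisymmetry of the sine, and the observation that for fixed $m$ the differences $\theta_m-\theta_l$, read cyclically, run through the same grid values as $\theta_l$ — so the quantities $\sigma^2_{j,n}$ appear immediately from their definitions as finite sums of $\rho$ over the grid; the outer sum over $m$ then collapses via the discrete orthogonality relations (\ref{e:basis}) established in the proof of Lemma~\ref{DFT}. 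You instead expand $\rho(\phi)=\sum_{m\in\oZ}\tilde\sigma_m e^{im\phi}$, write $S_l$ as a Dirichlet kernel, and invoke the modular orthogonality $\sum_{l=1}^n e^{ip\theta_l}=n\,{\bf 1}\{p\equiv 0 \;(\mathrm{mod}\; n)\}$ for \emph{all} integers $p$ — a mild but necessary extension of what Lemma~\ref{DFT}'s proof records (your check that oddness of $n$ kills the boundary phase is exactly what makes it valid). What your route buys is conceptual: it exhibits $\sigma^2_{j,n}$ explicitly as the folded (aliased) spectral mass $\sum_{m\equiv j\,(\mathrm{mod}\,n)}\tilde\sigma_m$ and shows that $J<n/2$ is precisely the no-aliasing condition between retained modes, facts the paper leaves implicit behind the cyclic-relabelling trick. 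What the paper's route buys is that it is entirely finitary — only finite sums of $\rho$-values appear — whereas your argument interchanges the infinite spectral sum over $m$ with the finite sums over $l,l',j,k$; this is legitimate but deserves a word of justification, namely absolute convergence $\sum_j\sigma_j^2<\infty$ assumed in Section~\ref{S:noise}, which you leave unstated.
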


\begin{proof}
It follows immediately from Definition~\ref{d:data} that, for each 
$t = 0, \dots, T$, the random vector $X_t = (X_t^1, \dots, X_t^n)$ 
is normally distributed. Its mean vector consists of the $\Gamma(\theta_l)$.
Its components are independent, and the covariance matrix $\Sigma$ of each has
entries $\Sigma_{lm} = \rho(\theta_m - \theta_l)$. Moreover, the random vectors
$X_t$ are independent. Therefore, 
\[
\EE \widehat{\Gamma_n}(\theta) = \sum_{l=1}^n \Gamma(\theta_l) S_l(\theta)
\]
is as claimed upon using the classic trigonometric formula for the 
cosine of a sum. Also,
\begin{equation}
\label{e:Cov}
{\rm Cov}(\widehat{\Gamma_{n,i}}(\theta), \widehat{\Gamma_{n,i}}(\eta)) = 
\frac{1}{T+1} \sum_{l=1}^n \sum_{m=1}^n \rho(\theta_m-\theta_l) 
S_l(\theta) S_m(\eta)
\end{equation}
for $i=1,2$; different components are independent.
Now
\begin{eqnarray*}
\sum_{l=1}^n \rho(\theta_m - \theta_l) S_l(\theta) & = &
\frac{1}{n} \sum_{l=1}^n \rho(\theta_m - \theta_l) +
\frac{2}{n} \sum_{l=1}^n \sum_{j=1}^J \rho(\theta_m - \theta_l) 
\cos(j (\theta - \theta_m) + j (\theta_m - \theta_l) ) \\
& = & 
\sigma^2_{0,n} + \sum_{j=1}^n \sigma^2_{j,n} \cos( j (\theta - \theta_m) ) - 0
\end{eqnarray*}
by the trigonometric formula for the cosine of a sum, the fact that, 
for fixed $m$,  $\theta_m - \theta_l$ cyclically interpreted run through 
the same values as $\theta_l$, and the anti-symmetry of the sine function. 
Consequently, (\ref{e:Cov}) reads
\[
\frac{1}{T+1} \sum_{m=1}^n \left[ \frac{1}{n} + \frac{2}{n} \sum_{i=1}^J
\cos( i(\eta - \theta_m) ) \right]
\times
\left[
\sigma^2_{0,n} + \sum_{j=1}^J \sigma^2_{j,n} \cos( j (\theta - \theta_m) ) 
\right]. 
\]
To conclude the proof, note that, by the proof of Lemma~\ref{DFT},
\begin{eqnarray*}
\sum_{m=1}^n \cos( i( \eta-\theta_m) ) \cos( j( \theta-\theta_m) ) & = &
\left[ \cos( j \eta) \cos( j\theta ) + \sin(j\eta) \sin(j\theta) \right]
 c_j  {\bf 1} \{ i = j \} \\
& = &
c_j \cos( j( \eta - \theta))  {\bf 1} \{ i = j \} 
\end{eqnarray*}
for $i, j= 0, \dots, J$, where $c_0 = n$ and $c_j = n/2$ for $j\geq 1$.
\end{proof}

\begin{thm}
Consider the estimator (\ref{e:smooth}) in the model (\ref{e:simple}) 
of Definition~\ref{d:data} and assume $J < n/2$. Then
\[
\frac{1}{\pi}
\int_{-\pi}^\pi || \widehat{\Gamma_n}(\theta) - \Gamma(\theta) ||^2 d\theta 
= \sum_{j=J+1}^\infty \left[ || \mu_j ||^2 + || \nu_j ||^2 \right] +
 Z_{J,n}
\]
where $Z_{J,n} = \left( 
2 \sigma^2_{0,n} Z_0 + \sum_{j=1}^J \sigma^2_{j,n} Z_{j,n} \right) / (T+1) $
and the $Z_{j,n}$ are independent $\chi^2$ distributed random variables with
four degrees of freedom for $j\geq 1$, two for $j=0$, and non-centrality
parameters $(T+1) c_{j,n} / \sigma^2_{j,n}$ with
\[
c_{j,n} = || \mu_{j,n} -\mu_j ||^2  + || \nu_{j,n} -\nu_j ||^2
\]
for $j=1, \dots, J$ and $c_{0,n} = || \mu_{0,n} -\mu_0 ||^2$ for $j=0$.
Moreover,
\(
Z_{J,n} \to  2 c_{0,n} + \sum_{j=1}^J c_{j,n}
\)
almost surely as $T\to\infty$.
\end{thm}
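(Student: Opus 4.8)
The plan is to mirror the proof of Theorem~\ref{t:Gamma}, but with the exact Fourier coefficients replaced by their Riemann-sum counterparts, keeping track of the discretisation bias that distinguishes $\mu_{j,n},\nu_{j,n}$ from $\mu_j,\nu_j$. First I would write the estimator (\ref{e:smooth}) as
\[
\widehat{\Gamma_n}(\theta) = \hat\mu_{0,n} + \sum_{j=1}^J \left[ \hat\mu_{j,n}\cos(j\theta) + \hat\nu_{j,n}\sin(j\theta)\right],
\]
where $\hat\mu_{j,n} = \frac{1}{T+1}\sum_{t=0}^T F_{j,n}^t$ and $\hat\nu_{j,n} = \frac{1}{T+1}\sum_{t=0}^T G_{j,n}^t$ are the averaged discrete coefficients of (\ref{e:Fourier-discrete}). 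By Theorem~\ref{t:discrete}, this is a stationary cyclic Gaussian process of degree $J<n/2$ with independent components, mean coefficients $\mu_{j,n},\nu_{j,n}$, and covariance $\rho_{J,n}/(T+1)$. Matching this covariance to the canonical form recalled in Section~\ref{S:noise} shows that the centred coefficients $\hat\mu_{j,n}-\mu_{j,n}$ and $\hat\nu_{j,n}-\nu_{j,n}$ are mutually independent zero-mean Gaussian vectors whose components each have variance $\sigma^2_{j,n}/(T+1)$.

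Next I would apply Parseval's identity (\ref{e:Parseval}) to $\widehat{\Gamma_n}-\Gamma$. Since the estimator is truncated at $J$, this difference has Fourier coefficients $\hat\mu_{0,n}-\mu_0$ at order zero, $\hat\mu_{j,n}-\mu_j$ and $\hat\nu_{j,n}-\nu_j$ for $1\le j\le J$, and $-\mu_j,-\nu_j$ for $j>J$, so that
\begin{eqnarray*}
\frac{1}{\pi}\int_{-\pi}^\pi || \widehat{\Gamma_n}(\theta)-\Gamma(\theta) ||^2 d\theta
&=& \sum_{j=J+1}^\infty \left[ || \mu_j ||^2 + || \nu_j ||^2 \right] \\
&& {} + 2 || \hat\mu_{0,n}-\mu_0 ||^2 + \sum_{j=1}^J \left[ || \hat\mu_{j,n}-\mu_j ||^2 + || \hat\nu_{j,n}-\nu_j ||^2 \right].
\end{eqnarray*}
The first sum is the claimed deterministic tail; the remaining terms constitute $Z_{J,n}$.

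I would then identify the distribution of each order-$j$ block. Splitting $\hat\mu_{j,n}-\mu_j = (\hat\mu_{j,n}-\mu_{j,n}) + (\mu_{j,n}-\mu_j)$ exhibits it as a centred Gaussian of component variance $\sigma^2_{j,n}/(T+1)$ shifted by the deterministic bias $\mu_{j,n}-\mu_j$. For $1\le j\le J$ the four scalar entries of $\hat\mu_{j,n}-\mu_j$ and $\hat\nu_{j,n}-\nu_j$ are therefore independent Gaussians with common variance $\sigma^2_{j,n}/(T+1)$ and squared-mean total $c_{j,n}$; by the standard fact that $|| W ||^2/\tau^2$ is non-central $\chi^2_d$ with non-centrality $|| \EE W ||^2/\tau^2$ when $W$ is $N_d(\cdot,\tau^2 I_d)$, dividing by $\sigma^2_{j,n}/(T+1)$ yields a non-central $\chi^2_4$ variable $Z_{j,n}$ with non-centrality $(T+1)c_{j,n}/\sigma^2_{j,n}$. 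The order-zero block gives the analogous $\chi^2_2$ term $Z_0$ with $c_{0,n}=|| \mu_{0,n}-\mu_0 ||^2$, and independence across $j$ is inherited from the independence of the coefficients. Collecting the scalar factors $\sigma^2_{j,n}/(T+1)$ reproduces $Z_{J,n}$ exactly.

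Finally, for the almost-sure limit I would not reuse the tail estimate of the preceding theorem, since here the non-centrality grows linearly in $T$; instead I would invoke the strong law of large numbers directly. As $\hat\mu_{j,n}$ and $\hat\nu_{j,n}$ are sample means over the i.i.d.\ replicates $t=0,\dots,T$ with expectations $\mu_{j,n}$ and $\nu_{j,n}$, they converge almost surely to those values, whence each summand $|| \hat\mu_{j,n}-\mu_j ||^2 + || \hat\nu_{j,n}-\nu_j ||^2\to c_{j,n}$ and $2|| \hat\mu_{0,n}-\mu_0 ||^2\to 2c_{0,n}$; summing the finitely many terms gives $Z_{J,n}\to 2c_{0,n}+\sum_{j=1}^J c_{j,n}$. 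The main obstacle is the step establishing that the discrete coefficients remain mutually independent with the clean variances $\sigma^2_{j,n}/(T+1)$ in spite of aliasing. This hinges on the assumption $J<n/2$, which keeps the frequencies $1,\dots,J$ pairwise non-aliased under the discrete orthogonality relations (\ref{e:basis}) proved in Lemma~\ref{DFT}; without it, high-order noise frequencies would fold back and couple distinct coefficients, destroying the product structure of the $Z_{j,n}$.
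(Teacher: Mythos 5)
Your proposal is correct, and its skeleton coincides with the paper's proof: Parseval's identity splits the integrated squared error into the deterministic tail $\sum_{j>J}\left[ \|\mu_j\|^2+\|\nu_j\|^2\right]$ plus the truncated stochastic part, each order-$j$ block is recognised as a scaled non-central $\chi^2$ (four degrees of freedom for $j\geq 1$, two for $j=0$) with non-centrality $(T+1)c_{j,n}/\sigma^2_{j,n}$, and the almost-sure limit follows from the strong law of large numbers applied to the sample means $\hat\mu_{j,n}$, $\hat\nu_{j,n}$ — exactly as in the paper, which likewise abandons the Borel--Cantelli/Chernoff tail argument of the continuous-case theorem in favour of Kolmogorov's SLLN. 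Where you genuinely diverge is in establishing the second-order structure of the empirical coefficients: you obtain independence and the variances $\sigma^2_{j,n}/(T+1)$ by feeding the covariance function $\rho_{J,n}/(T+1)$ from Theorem~\ref{t:discrete} into the spectral facts recalled in Section~\ref{S:noise} (the Cram\'er--Leadbetter correspondence between a stationary cyclic Gaussian covariance and independent Fourier coefficients), whereas the paper recomputes everything at the level of finite sums, using the cosine-of-a-sum formula, $\sum_l \cos^2(j\theta_l)=n/2$, the Lagrange identities and the discrete orthogonality relations (\ref{e:basis}). Your route is shorter and avoids duplicating trigonometric computations, at the cost of leaning on the continuous-parameter spectral theory applied to the estimator process; the paper's computation is self-contained and makes the non-aliasing role of $J<n/2$ visible inside this very proof, a point you correctly flag only in your closing remarks. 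Both arguments are sound, and your identification of $J<n/2$ as the hypothesis preventing frequency fold-over is exactly the right diagnosis.
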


Note that the expected integrated squared error compared to 
(\ref{e:emse}) gains a factor $( 2 c_{0,n} + \sum_{j=1}^J c_{j,n})$ 
due to discretisation errors, except in the special case of Lemma~\ref{DFT}.

\

\noindent
\begin{proof}
By Parseval's identity
\[
\frac{1}{\pi} 
\int_{-\pi}^\pi || \widehat{\Gamma_n}(\theta) - \Gamma(\theta) ||^2
d\theta = 2 || \hat \mu_{0,n} - \mu_0 ||^2 +
\sum_{j=1}^\infty \left[ || \hat \mu_{j,n} - \mu_j ||^2 +
 || \hat \nu_{j,n} - \nu_j ||^2 \right]
\]
where $\hat\mu_{j,n}$ and $\hat \nu_{j,n}$ are the Fourier coefficients
of (\ref{e:smooth}). Due to the truncation of (\ref{e:smooth}) at $J$,
$\hat\mu_{j,n} = \hat\nu_{j,n} = 0$ for $j\geq J+1$.

Note that $\hat \mu_{j,n} - \mu_j$ and $\hat\nu_{j,n} - \nu_j$ are normally 
distributed with mean vectors $\mu_{j,n} - \mu_j$ and $\nu_{j,n} - \nu_j$, 
respectively. The covariance matrices are diagonal with entries 
$\sigma^2_{j,n}/(T+1)$. For $j=0$, this follows by direct computation 
upon recalling that, for fixed $l$, $\theta_m - \theta_l$ interpreted
cyclically run through the same values as $\theta_m$. For $j=1, \dots, J$,
the covariance entry is
\[
\frac{1}{T+1} \, \frac{4}{n^2} \sum_{l=1}^n \cos(j\theta_l) \sum_{m=1}^n
\rho(\theta_m - \theta_l) \cos( j(\theta_m - \theta_l) + j\theta_l).
\]
By the trigonometric formula for the cosine of a sum, the anti-symmetry
of the sine function and the observation that $\sum_l \cos^2(j\theta_l)
= n / 2$ under the given assumptions, we conclude that the covariance
entry is equal to $\sigma^2_{j,n}/(T+1)$. A similar reasoning applies to
$\hat \nu_{j,n}$.

To see that the family consisting of $\hat \mu_{j,n}$ for $j=0, \dots, J$
and $\hat\nu_{j,n}$ for $j=1, \dots, J$ is uncorrelated (hence independent),
once again use (\ref{e:basis}) in combination with the orthogonality of 
$\cos(j_1\theta_l)$ and $\sin(j_1\theta_l)$. The Lagrange identities imply 
that ${\rm Cov}(\hat\mu_{0,n}, \hat\mu_{j,n})= 0$ and 
${\rm Cov}(\hat\mu_{0,n}, \hat\nu_{j,n}) = 0$. 

We conclude that, for $j=1, \dots, J$,
$|| \hat \mu_{j,n} - \mu_j ||^2 +  || \hat \nu_{j,n} - \nu_j ||^2 $ 
multiplied by $(T+1)/\sigma^2_{j,n}$ is the sum of four independent
squared normals with different means, that is, a non-central $\chi^2$
distributed random variable with four degrees of freedom and 
non-centrality parameter $(T+1) c_{j,n} / \sigma^2_{j,n}$ with
$c_{j,n} = || \mu_{j,n}-\mu_j||^2 + || \nu_{j,n} - \nu_j||^2$. 
For $j=0$, 
$|| \hat \mu_{j,n} - \mu_j ||^2 $ multiplied by $(T+1)/\sigma^2_{0,n}$
is the sum of two squared normals, hence a non-central $\chi^2$
distributed random variable with two degrees of freedom and non-centrality
parameter $(T+1) c_{0,n} / \sigma^2_{0,n}$ such that
$c_{0,n} = || \mu_{0,n}-\mu_0||^2 $.

Turning to asymptotics, since the components of $F_j^{t}$ have finite 
variance, Kolmogorov's strong law of large numbers implies almost sure 
covergence of $\hat \mu_{j,n} - \mu_j$ to $\mu_{j,n} - \mu_j$. 
The same holds for the $G_j^t$. Therefore $Z_{J,n}$ converges strongly to 
$2 c_{0,n} + \sum_{j=1}^J c_{j,n}$.
\end{proof}


\section{Alignment}
\label{S:align}

Most data do not come in perfectly registered form and need to be aligned.
Section~\ref{S:diffeo} discusses how diffeomorphisms can be used for
this purpose; Section~\ref{S:register} derives estimators for the alignment 
parameters.

\subsection{Diffeomorphisms}
\label{S:diffeo}

Recall that, given a root, any parametrisation $\Gamma$ of a (simple) 
closed $C^1$ curve can be written as a composition $\Gamma^\prime \circ \varphi$ 
of a fixed parametrisation $\Gamma^\prime$ (say the arc length from the root) 
with a diffeomorphism $\varphi$, cf.\ Section~\ref{S:curves}. Thus, given two 
curves parametrised by, say, $\Gamma$ and $\Gamma_1$, alignment of $\Gamma_1$ 
to $\Gamma$ amounts to finding a shift $\alpha$ to get a common beginning and 
a diffeomorphism $\varphi$ to move along the curve at equal speed such that
$\Gamma_1(\theta) \approx \Gamma( \varphi(\theta - \alpha) )$ interpreted 
cyclically. Without loss of generality, we consider diffeomorphisms 
$\varphi$ from $[-\pi,\pi]$ onto itself.

Parametric diffeomorphisms can be constructed as the flow of differential 
equations \cite[Chapter~8]{Youn10}. In our context, it is convenient to 
consider the differential equation
\begin{equation}
\label{e:ode}
x^\prime(t) = f_w(x(t)), \quad \quad t \in \oR,
\end{equation}
with initial condition $x(0) = \theta \in [-\pi, \pi]$.
Heuristically, consider a particle whose position at time $0$ is $\theta$.
If the particle travels with speed governed by the function $f_w$, then $x(t)$ 
is its position at time $t$. To emphasise the dependence on the initial 
state we shall also write $x_\theta(t)$. 

We let $f_w$ be a trigonometric polynomial, that is, a
linear combination of Fourier basis functions with pre-specified
values $w_i$ at equidistant $x_i \in [-\pi, \pi]$ under the constraint
that $f_w(-\pi) = f_w(\pi) = 0$. More precisely, 
let $-\pi = x_0 < x_1 < \cdots < x_{2m} < \pi$, $w_0 = 0$, and define
\[
f_w(x) = \sum_{j=0}^{2m} w_j t_j(x)
\]
where
\begin{equation}
\label{e:tripo}
t_j(x) = \frac{ \prod_{j \neq k=0}^{2m} \sin\left( \frac{x - x_k}{2} \right) }
{ \prod_{j \neq k=0}^{2m} \sin\left( \frac{x_j - x_k}{2} \right) }
\end{equation}
for arbitrary $w_1, \dots, w_{2m}$ and $m\geq 1$. By \cite[Theorem~8.7]{Youn10},
the function
\[
\theta\mapsto x_\theta(1) = 
  \theta + \int_0^1 \sum_{j=0}^{2m} w_j t_j( x_\theta(t) ) dt,
\]
the solution of (\ref{e:ode}) at time $1$, is a diffeomorphism of 
$[-\pi, \pi]$. This function is known as the {\em flow\/} of the differential
equation and denoted by $\varphi(\theta) = x_\theta(1)$. Since the flow 
depends on the weights, we shall also write $\varphi_w(\theta)$ to emphasise 
this fact. In the next section, we shall need the derivative of
(\ref{e:tripo}), which is given by
\[
t_j^\prime(x) = \frac{ \sum_{j \neq k=0}^{2m} \cos\left( \frac{x - x_k}{2} \right)
\prod_{j,k \neq i=0}^{2m}
\sin\left( \frac{x - x_i}{2} \right)
}
{ 2 \prod_{j \neq k=0}^{2m} \sin\left( \frac{x_j - x_k}{2} \right) }.
\]

Note that in total, there are $2m + 1$ alignment parameters, $2m$ 
for the diffeomorphism and one for the shift in starting point.

\subsection{Inference on alignment parameters}
\label{S:register}

Return to the model introduced in Definition~\ref{d:data}, that is,
\[
X_t(\theta) = \Gamma( \varphi_{w_t} (\theta - \alpha_t)  ) +
              N_t( \varphi_{w_t} (\theta - \alpha_t) ) 
\]
observed at $\theta_l = -(n+1)\pi/n + 2\pi l/n$, $l=1, \dots, n$, and
extended to $[-\pi,\pi]$ by trigonometric interpolation. The latter
is valid, since $n$ is odd. By (\ref{e:smooth}), 
\(
\widehat{\Gamma_n}(\theta) =  \sum_{t=0}^T \hat \Gamma_t(\theta) / (T+1)
\)
where 
\[
\hat \Gamma_t(\theta)  =
\sum_{l=1}^n X_t( \varphi_{-w_t}(\theta_l) +\alpha_t ) S_l(\theta)
\]
is a smoother for the $t$-th curve.
Therefore, the alignment parameters may be estimated by minimising
\begin{equation}
\label{e:M}
M_n(\alpha_0, \dots, \alpha_t, w_0, \dots, w_t) = \sum_{t=0}^T \sum_{l=1}^n
|| \hat \Gamma_t(\theta_l) -  \widehat{\Gamma_n}(\theta_l) ||^2,
\end{equation}
the Riemann sum approximation to the total $L_2$-distance between 
the smoothed data curves and the estimated `true' curve after alignment. 

Without constraints, (\ref{e:M}) is unidentifiable. To see this, note that
for any diffeomorphism $\varphi$ and any shift $\alpha$, 
\[
\int_{-\pi}^\pi || \Gamma( \varphi_t(\theta - \alpha_t) ) - 
\frac{1}{T} \sum_{t=0}^T \Gamma( \varphi_t(\theta - \alpha_t) ) ||^2 d\theta 
\]
is zero whenever $\alpha_t \equiv \alpha$ and $\varphi_t \equiv \varphi$.
We shall use the constraint $\alpha_0 =0$ for the root point. For the 
weight vector, one may set $w_0 = 0$ corresponding to the identity map.
If the points of $X_0^l$ do not cover the curve well, an alternative
is to constrain the average $\sum_t w_t$ to zero.

To optimise $M_n$ over its arguments, one needs its derivatives.

\begin{res} 
Consider the model of Definition~\ref{d:data} and use trigonometric
interpolation for $X(\cdot)$. Then, the partial derivatives of (\ref{e:M})
are, for $t=0, \dots, T$ an $i=1, \dots, 2m$,
\begin{eqnarray*}
\frac{\partial M}{\partial \alpha_{t}} & = &
2 \sum_{l=1}^n \left[
\hat \Gamma_{t}(\theta_l) -\widehat{\Gamma_n}(\theta_l)
\right]^T
\sum_{k=1}^n S_k(\theta_l) 
   X_t^\prime( \varphi_{-w_t}(\theta_k) + \alpha_t ); \\
\frac{\partial M}{\partial w_{t, i}}  & = &
2 \sum_{l=1}^n \left[
\hat \Gamma_{t}(\theta_l) -\widehat{\Gamma_n}(\theta_l)
\right]^T
\sum_{k=1}^n S_k(\theta_l) 
\frac{\partial}{\partial w_{t, i}} \varphi_{-w_t}(\theta_k)
   X_t^\prime( \varphi_{-w_t}(\theta_k) + \alpha_t ).
\end{eqnarray*}
\end{res}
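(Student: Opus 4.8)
The plan is to obtain both formulae from a single chain-rule computation, the decisive simplification being that $\widehat{\Gamma_n}$ is the arithmetic mean of the individual smoothers. Abbreviate $d_s(\theta_l) = \hat\Gamma_s(\theta_l) - \widehat{\Gamma_n}(\theta_l)$, so that $M = \sum_{s=0}^T \sum_{l=1}^n \| d_s(\theta_l)\|^2$; here I rename the outer summation index to $s$ to avoid clashing with the fixed index $t$ of the parameter being differentiated. Let $p$ denote either $\alpha_t$ or $w_{t,i}$. Differentiating the squared Euclidean norm termwise gives
\[
\frac{\partial M}{\partial p} = 2 \sum_{s=0}^T \sum_{l=1}^n d_s(\theta_l)^T \frac{\partial d_s(\theta_l)}{\partial p}.
\]

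Next I would record how $p$ enters the summands. The parameter $p$ belongs to curve $t$, so it appears in $\hat\Gamma_s$ only when $s=t$, and it appears in every $d_s$ through the mean $\widehat{\Gamma_n} = (T+1)^{-1}\sum_r \hat\Gamma_r$. Hence $\partial d_s(\theta_l)/\partial p = (\,{\bf 1}\{ s=t \} - (T+1)^{-1}\,)\,\partial \hat\Gamma_t(\theta_l)/\partial p$. Substituting this and summing over $s$ first, the crucial observation is that $\sum_{s=0}^T d_s(\theta_l) = \sum_s \hat\Gamma_s(\theta_l) - (T+1)\widehat{\Gamma_n}(\theta_l) = 0$, so the $-(T+1)^{-1}$ contribution cancels and only the $s=t$ term survives:
\[
\frac{\partial M}{\partial p} = 2 \sum_{l=1}^n d_t(\theta_l)^T \frac{\partial \hat\Gamma_t(\theta_l)}{\partial p}.
\]
This cancellation is the heart of the argument and is what keeps the final expressions free of any averaging term.

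It then remains to differentiate the single smoother $\hat\Gamma_t(\theta_l) = \sum_{k=1}^n X_t(\varphi_{-w_t}(\theta_k)+\alpha_t)\,S_k(\theta_l)$ under the finite sum over $k$. Because $X_t$ has been extended off the sample nodes by trigonometric interpolation it is smooth, so $X_t'$ exists and the chain rule applies to the composite $X_t(\varphi_{-w_t}(\theta_k)+\alpha_t)$. For $p=\alpha_t$ the inner derivative of the argument equals $1$, producing $\sum_k S_k(\theta_l)\,X_t'(\varphi_{-w_t}(\theta_k)+\alpha_t)$; for $p=w_{t,i}$ it equals $\partial \varphi_{-w_t}(\theta_k)/\partial w_{t,i}$, producing the stated extra factor. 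Inserting these into the second displayed identity and restoring $d_t(\theta_l)=\hat\Gamma_t(\theta_l)-\widehat{\Gamma_n}(\theta_l)$ yields exactly the two claimed formulae.

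The main technical point to justify is the existence of $\partial \varphi_{-w_t}(\theta_k)/\partial w_{t,i}$, that is, the differentiable dependence of the flow of (\ref{e:ode}) on the weight vector $w_t$. Since $f_w$ depends linearly on $w$ through the fixed trigonometric basis, smooth dependence of the time-$1$ solution on the parameters is the standard theorem on differentiability of ODE flows with respect to parameters, as set up in Section~\ref{S:diffeo}. Everything else is bookkeeping with the chain rule, so I expect no further obstacle.
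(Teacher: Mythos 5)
Your proposal is correct and takes essentially the same route as the paper: differentiate termwise with a generic parameter, observe that only the $s=t$ term survives because $\sum_{s=0}^T \bigl( \hat\Gamma_s(\theta_l) - \widehat{\Gamma_n}(\theta_l) \bigr) = 0$, then differentiate the smoother under the finite sum via the chain rule. Your two additions — making the cancellation explicit (the paper passes over it silently between its two display lines) and flagging the differentiable dependence of the flow $\varphi_{-w_t}$ on the weights — are sound elaborations of points the paper leaves implicit or defers to its remark following the lemma.
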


\begin{proof}
Write $z_t$ for a generic component of the alignment parameter of curve
$t=1, \dots, T$. Then
\begin{eqnarray*}
\frac{\partial M}{\partial z_{t}}  & = &
\sum_{s=0}^T \sum_{l=1}^n 2 \left( \hat \Gamma_s(\theta_l) -
\widehat{\Gamma_n}(\theta_l) \right)^T \left[
1\{ t = s \} \frac{\partial}{\partial z_t} 
\hat \Gamma_s(\theta_l) 
- \frac{1}{T+1} \frac{\partial}{\partial z_t}
\hat \Gamma_t(\theta_l) 
\right] \\
& = &
2 \sum_{l=1}^n \left[
\hat \Gamma_{t}(\theta_l) -\widehat{\Gamma_n}(\theta_l)
\right]^T
\frac{\partial}{\partial z_t} \hat \Gamma_t( \theta_l) .
\end{eqnarray*}
Now
\[
\frac{\partial}{\partial z_t}  \hat \Gamma_{t}(\theta_l) =
\sum_{k=1}^n S_k(\theta_l) \frac{\partial}{\partial z_t }
 X_t( \varphi_{-w_t}(\theta_k) + \alpha_t),
\]
from which the claim follows by the chain rule.
\end{proof}

It is well-known from the theory of ordinary differential equations 
\cite[Chapter~1.7]{CoddLevi55} that the partial derivative of 
\(
\varphi_{-w_{t,i}}(\theta)
\)
with respect to $w_{t,i}$ 
is the unique solution of the differential equation
\[
\frac{\partial }{\partial s}  u(s)   
= f^\prime_{-w_t} ( x_\theta(s) )  u(s) - t_i( x_\theta(s) )
\]
at time $s=1$ with initial value $u(0) = 0$
where $x_\theta(s)$ is a solution of (\ref{e:ode}) with weight 
vector $w = -w_t$. 

Having estimated the alignments, the theory of Section~\ref{S:inference}
may be applied to the transformed contours 
$Y_t(\theta) = X_t(\varphi_{-\hat w_t}(\theta + \hat \alpha_t) )$.

\section{Applications}
\label{S:applic}

In this section, we apply the techniques discussed in
Section~\ref{S:inference}--\ref{S:align} to simulated and real life data. 
We work in {\tt R} and use the R-package {\tt deSolve} \cite{Soetetal10}
for solving the differential equations involved.

\begin{figure}[thb]
\begin{center}
\centerline{
\epsfxsize=0.3\hsize
\epsffile{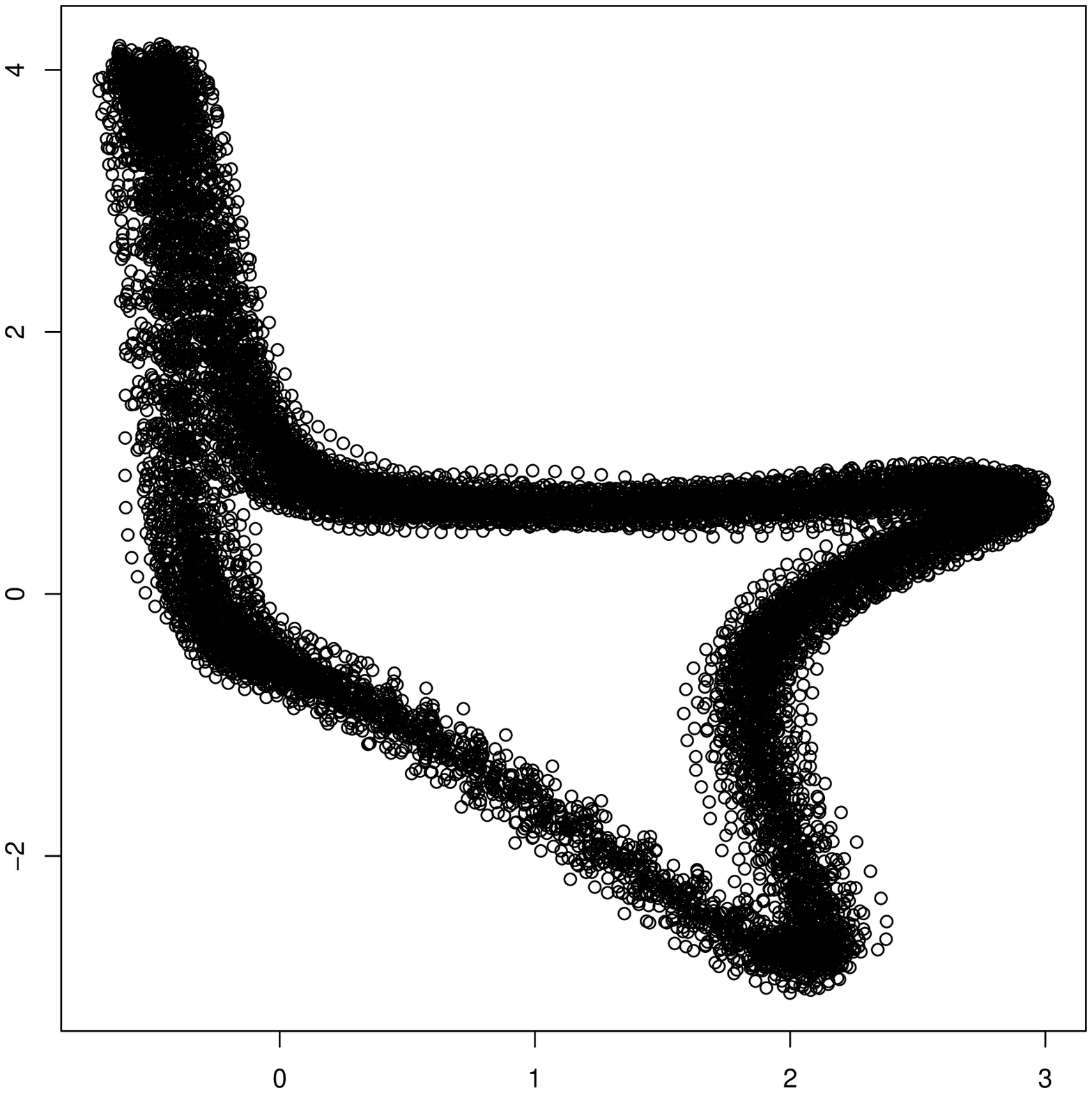}
\epsfxsize=0.3\hsize
\epsffile{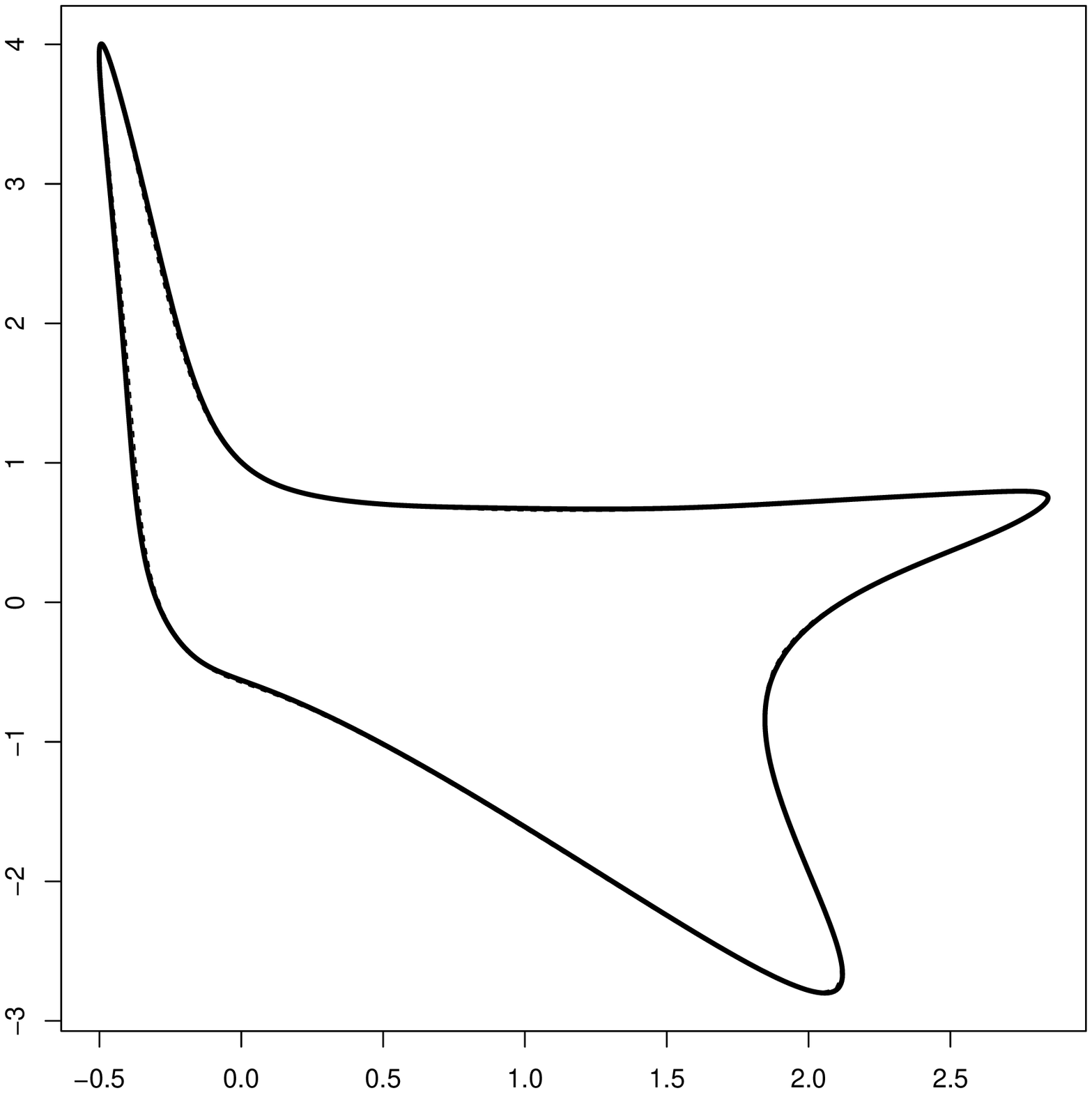}
\epsfxsize=0.3\hsize
\epsffile{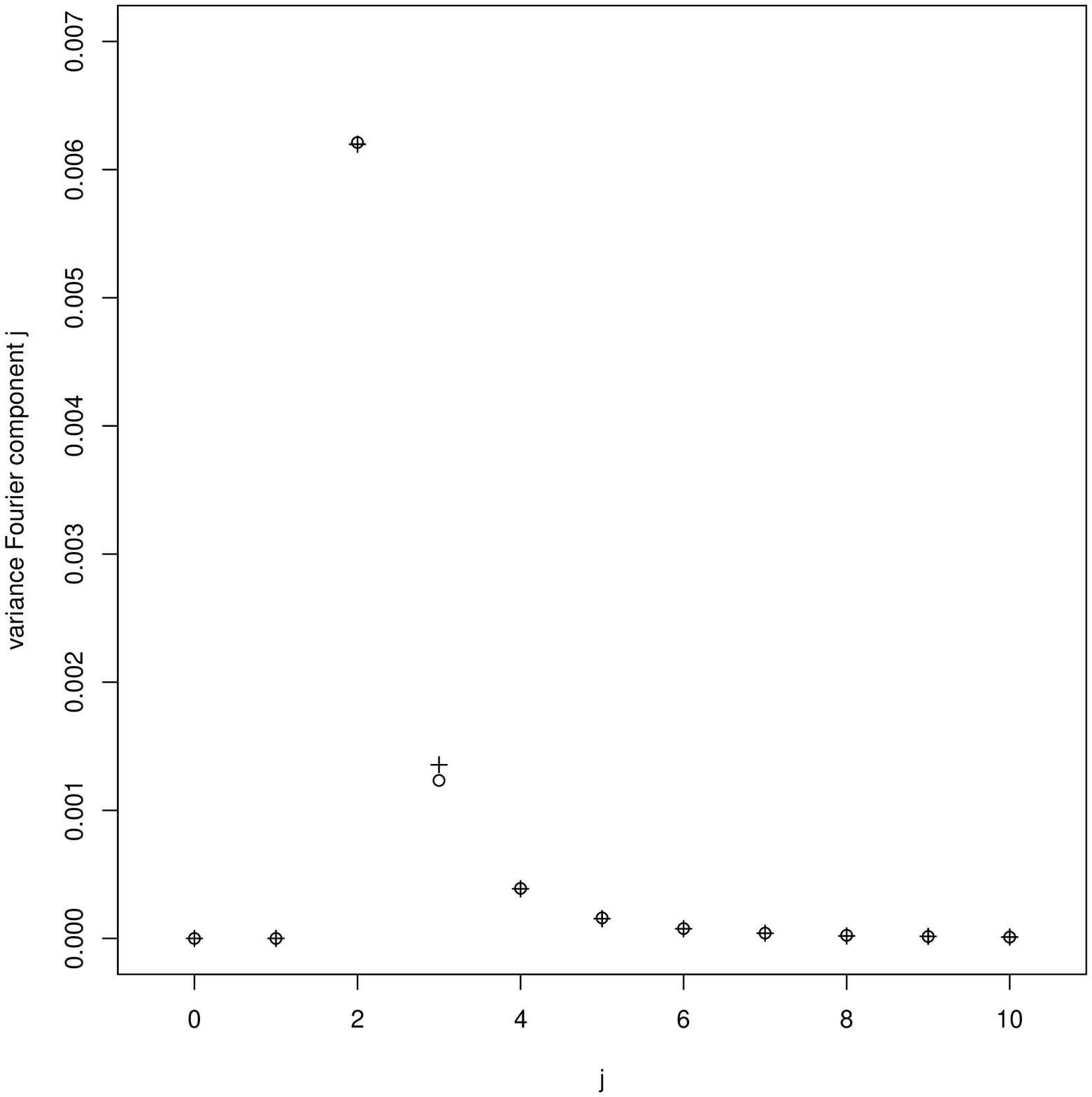} }
\end{center}
\caption{Left-most panel: Data points sampled along $100$ curves. 
Middle panel: Estimated (solid line) and true curve (dashed line). 
Right-most panel: Estimated variance $\hat \sigma^2_j$ of the Fourier 
coefficients plotted against $j$ (crosses) compared to their true 
value $\sigma_j^2$ (circles).}
\label{F:sim}
\end{figure}

\subsection{Simulated example}


The left-hand panel in Figure~\ref{F:sim} shows a hundred contours 
consisting of points sampled at $\theta_l = -\pi + l/20$, 
$l=0, \dots, 125$, along a nested quintic curve, 
cf.~\cite{Kere04}, degraded by noise.
For the noise we use the generalised $p$-order model \cite{Hoboetal03} 
discussed in Example~\ref{e:Hobolth} with $p=2$, $\alpha = 1.0$ and 
$\beta = 10.0$, truncated at ten Fourier coefficients. Note that the
sample paths are almost surely continuously differentiable.

We use equation (\ref{e:Gamma}) to estimate the true curve $\Gamma$. The 
result is shown as the solid line in the middle panel of Figure~\ref{F:sim}.
The truth is shown as the dashed line in the same panel. It can be seen that
the match is excellent. 

We also estimate the variances $\sigma^2_j$ for $j=0, \dots, 10$, according 
to Lemma~\ref{t:mle}. These are shown as crosses in the right-most panel of 
Figure~\ref{F:sim}. For comparison, the true values are plotted too (the 
circles in the right-most panel of Figure~\ref{F:sim}). 

\subsection{Lake Tana}


Figure~\ref{F:Tana} shows three images of Laka Tana, the largest lake in 
Ethiopia and the source of the Blue Nile. It is located near the centre of 
the high Ethiopian plateau and covers some 1400 square miles. Clearly 
visible is Dek island, site of historic monasteries, in the south-central 
portion of the lake, which we shall use as the centre of our coordinate system.

The three images were downloaded from NASA's `The Gateway to Astronaut 
Photography of Earth' website
\begin{verbatim}
http://eol.jsc.nasa.gov/scripts/sseop/photo.pl?mission=STS098&roll=711&frame
\end{verbatim}
(frames $23, 24, 25$). The images were taken on February 17th, 2001, at one 
second intervals by astronauts on the STS098 mission from a space craft 
altitude of $383$ km. 
The centre is at latitude 12.0 and longitude 37.5 degrees. The cloud cover 
is about $25\%$.

\begin{figure}[thb]
\begin{center}
\centerline{
\epsfxsize=0.3\hsize
\epsffile{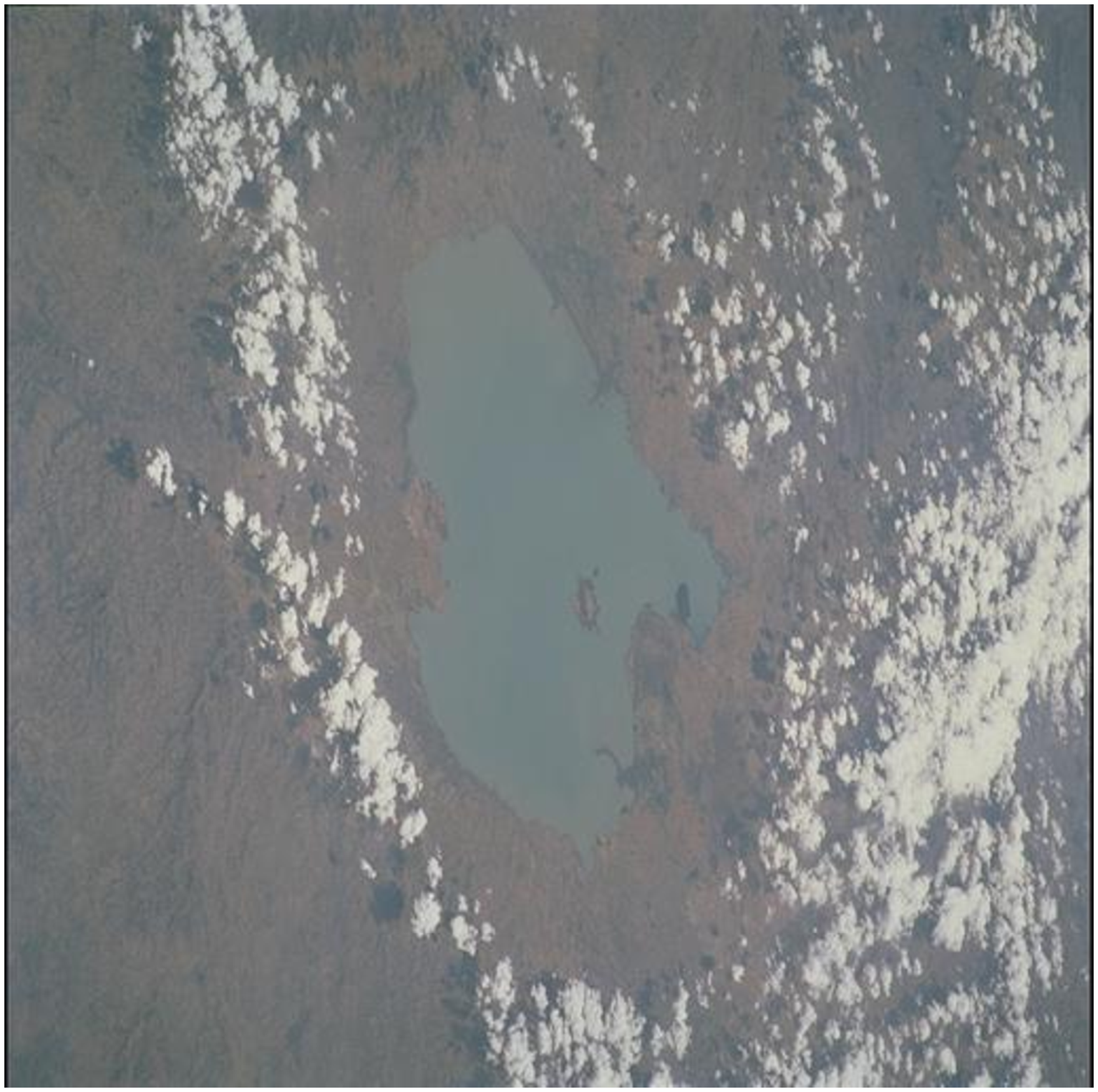}
\epsfxsize=0.3\hsize
\epsffile{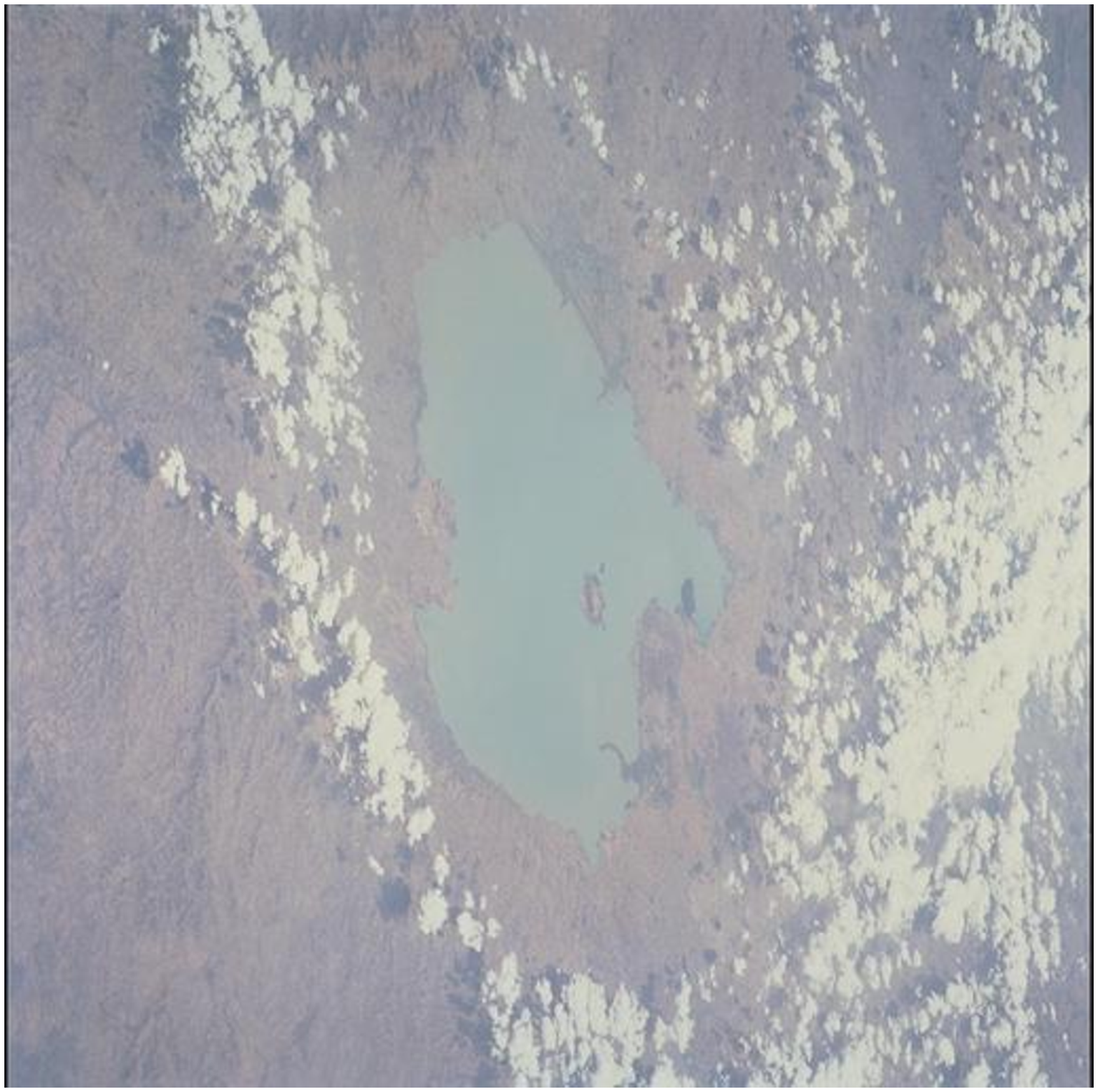}
\epsfxsize=0.3\hsize
\epsffile{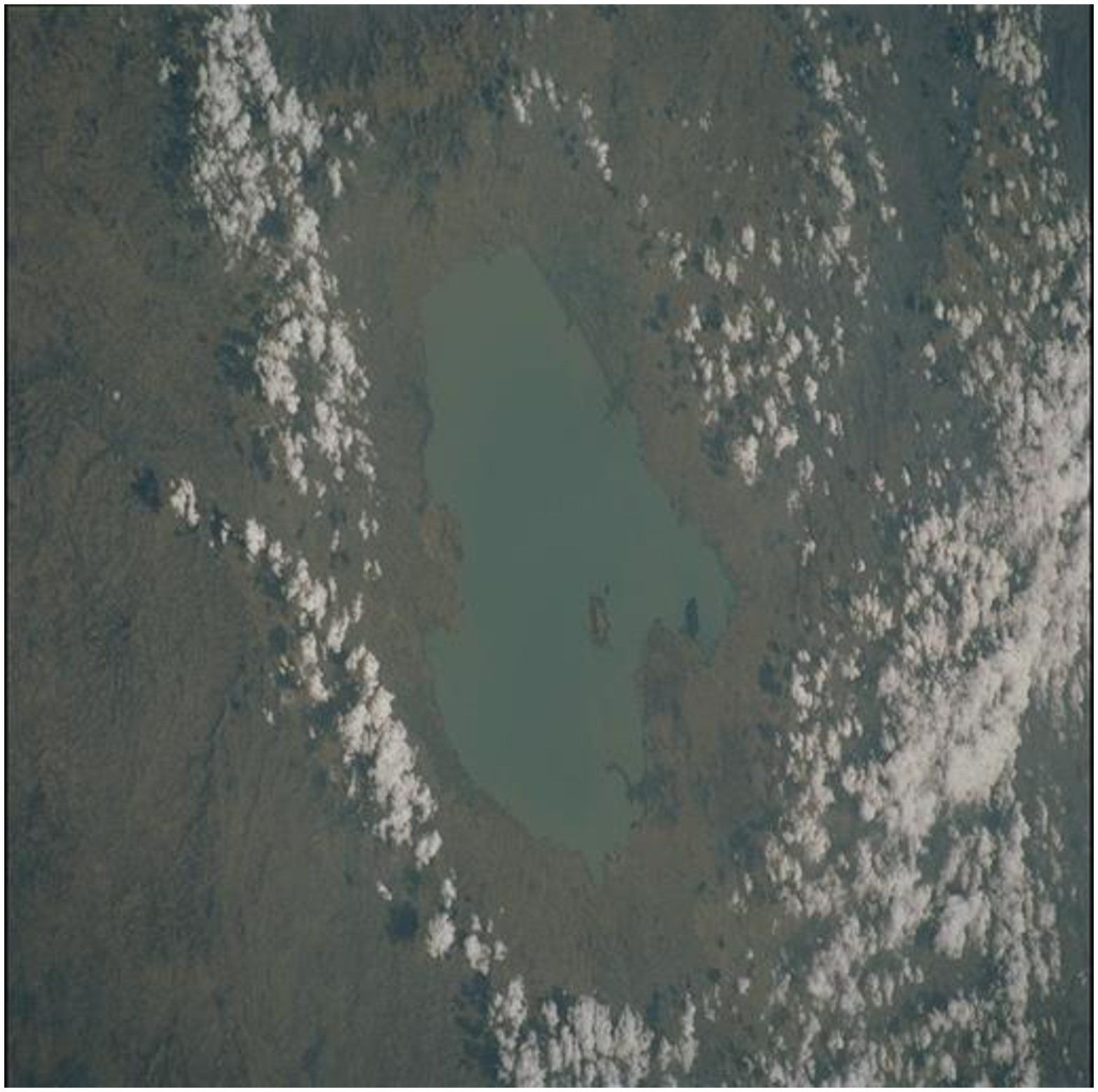} }
\end{center}
\caption{Images courtesy of the Image Science \& Analysis Laboratory, NASA 
Johnson Space Center. For details see text.}
\label{F:Tana}
\end{figure}

Note that the lake's border is rather fuzzy, resulting in a low image gradient. 
The output of edge detection algorithms is degraded even further by the 
substantial cloud cover. Therefore, the border was traced manually by a 
volunteer.
The result is shown in the left-most panel in Figure~\ref{F:data}. 
There are $73$ points along each border curve.


\begin{figure}[hbt]
\begin{center}
\centerline{
\epsfxsize=0.3\hsize
\epsffile{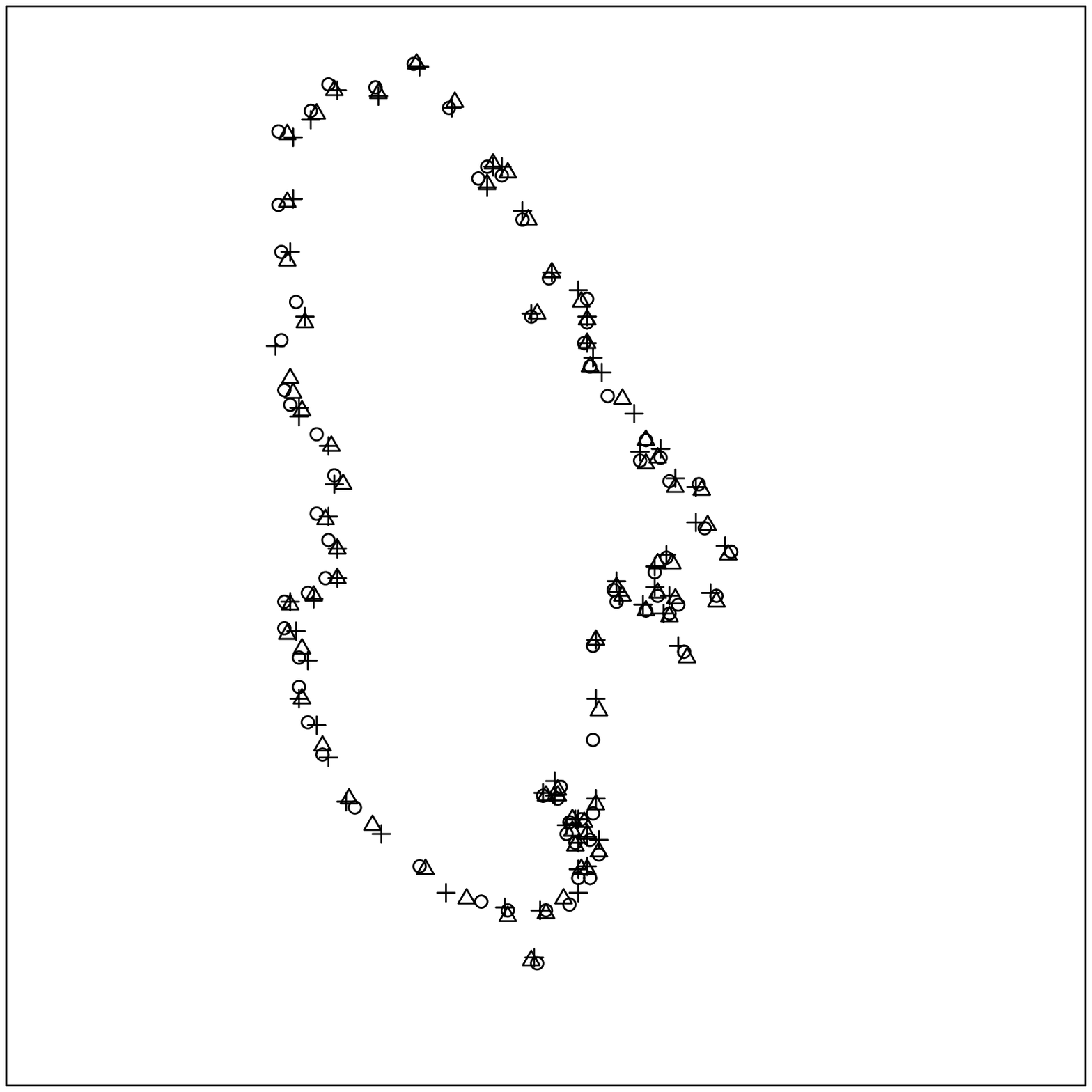} 
\epsfxsize=0.3\hsize
\epsffile{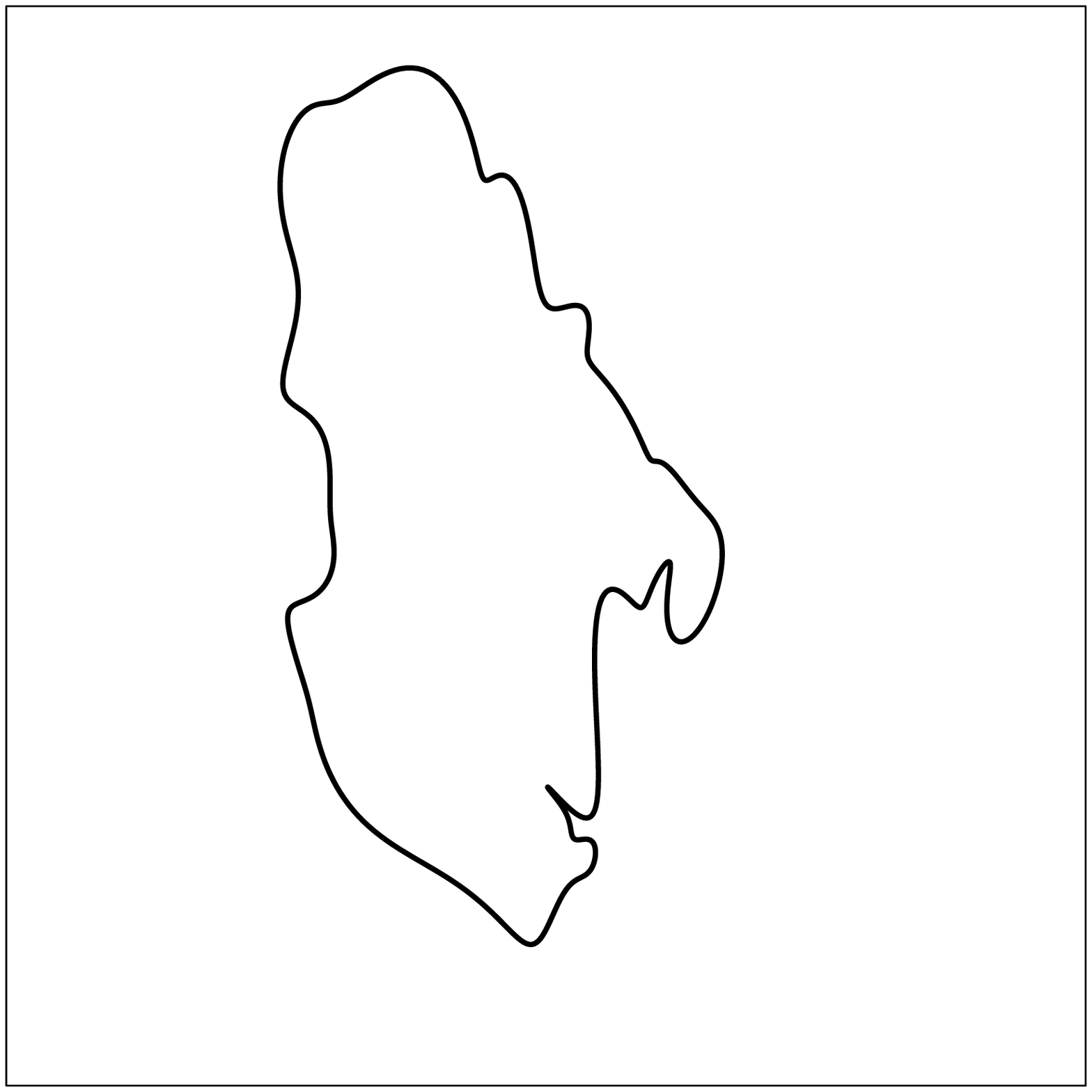} }
\end{center}
\caption{Left panel: Sampled boundary curves corresponding to 
Figure~\ref{F:Tana}. Circles trace the boundary of Lake Tana in the 
left-most panel, triangles correspond to the middle panel, and crosses trace 
the lake boundary in the right-most panel of Figure~\ref{F:Tana}. 
Right panel: Estimated border.}
\label{F:data}
\end{figure}

In contrast to the simulated data considered in the previous subsection,
the curves are not necessarily well aligned. We therefore consider
$M_{73}(\alpha_0, \alpha_1, \alpha_2)$ as in (\ref{e:M}). 
Using $20$ Fourier coefficients and $\alpha_0 = 0$, the optimal 
parameters are $\hat \alpha_1 = - 0.44$ and $\hat \alpha_2 = -2.33$ 
radians. The value of the optimisation function is $1195.048$ 
corresponding to an average error of $2.34$ pixels. The result can be 
improved by including diffeomorphic changes in speed. Optimising
$M_{73}(\alpha_0, \alpha_1, \alpha_2, w_0, w_1, w_t)$ for vectors $w_t$
in $\oR^{2m}$ with $m=5$, cf.~Section~\ref{S:align}, we find an $M$-value
of $568.0997$ corresponding to an average error of $1.61$ pixels. 
The optimal parameters are 
\[
\hat w_1 = 
(0.032, 0.029, 0.037, 0.015, 0.0058, 0.036, 0.016, 0.0096, -0.0080, 0.021)^T
\]
and 
\[
\hat w_2 =
(0.016, 0.037, 0.0081, -0.016, 0.037, 0.031, 0.024, 0.0064, 0.047, 0.029)^T
\]
for the diffeomorphisms and $\hat \alpha_1 = -0.42$ and $\hat \alpha_2 =
-2.32$. 
Finally, the estimated curve is plotted in the right-most panel in
Figure~\ref{F:data}.

\section{Discussion}

In this paper, we formulated a model for objects with uncertain boundaries
using concepts from pattern theory in combination with cyclic Gaussian
processes. The unknown boundary was estimated as a spectral mean by 
carrying out maximum likelihood estimation in the Fourier domain and 
transforming the results back to the spatial domain. We considered the 
integrated squared error and demonstrated how to deal with misalignment 
of the data. Finally, we applied the methods to simulated and real data. 

The approach may be generalised to periodic change models. Indeed,
write $\tau$ for the period. Then we may formulate the model
\begin{equation}
\label{e:curve}
X_{j+ t \tau}^l = X_{j+ t \tau}(\theta_l) =
\Gamma^{(j)}( \varphi_{j+t \tau}(\theta_l-\alpha_{j+t \tau}) ) + 
N_{j+t \tau}^{(j)}(\varphi_{j+t \tau}(\theta_l - \alpha_{j+ t\tau}) )
\end{equation}
for $t = 0, 1, \dots$. Here the $N^{(j)}$ are independent homogeneous 
mean zero cyclic Gaussian noise processes, the $\Gamma^{(j)}$ are unknown 
template curves at $j=0, \dots, \tau - 1$ steps into the period. Since 
the data is periodic, (\ref{e:curve}) splits into $\tau$ submodels of
the form discussed in this paper.

Finally, it is worth noting that, although they are prevalent in shape 
analysis \cite{Youn10}, diffeomorphisms have not been studied much in
stochastic geometry. In this paper, they have been used in different 
roles: for curve modelling and for alignment. It seems to the author
that there is scope for further research concerning the modelling of
random compact sets by means of their boundary curves in light of the
Jordan--Sch\H{o}nflies theorem \cite{Kere04}.

\section*{Acknowledgements}

This research was supported by The Netherlands Organisation for Scientific
Research NWO (613.000.809).


\begin{thebibliography}{99}

\bibitem{AletRuff13}
Aletti, G. and Ruffini, M. (2013).
Is the Brownian bridge a good noise model on the circle?
Technical Report, ArXiv 1210.8245v2, May 2013.
  
\bibitem{Bigo11}
Bigot, J. (2011).
Fr\'echet means of curves for signal averaging and application to
ECG data analysis.
Research report, University of Toulouse.

\bibitem{BurrFran96}
Burrough, P. and Frank, A. (1996).
{\em Geographic objects with indeterminate boundaries\/}.
London: Taylor \& Francis.

\bibitem{ChatColl80}
Chatfield, C. and Collins, A.J. (1980).
{\em Introduction to multivariate analysis\/}.
London: Chapman and Hall.

\bibitem{CoddLevi55}
Coddington, E.A. and Levinson, N. (1955).
{\em Theory of ordinary differential equations\/}.
New~York: McGraw--Hill.

\bibitem{CramLead67}
Cram\`er, H. and Leadbetter, M.R. (1967).
{\em Stationary and related stochastic processes. Sample function properties 
and their applications.}
New~York: Wiley.

\bibitem{Demp67}
Dempster, A.P. (1967).
Upper and lower probabilities induced by a multivalued mapping.
{\em Annals of Mathematical Statistics\/}, 38:325--329.

\bibitem{GoGo81}
Gohberg, I. and Goldberg, S. (1981).
{\em Basic operator theory}.
Boston: Birkh\H{a}user.

\bibitem{GrenMill07}
Grenander, U. and Miller, M.I. (2007).
{\em Pattern theory: from representation to inference.}
Oxford: Oxford University Press.

\bibitem{Hoboetal03}
Hobolth, A., Pedersen, J. and Jensen, E.B.V. (2003).
A continuous parametric shape model.
{\em Annals of the Institute of Statistical Mathematics\/}, 55:227--242.

\bibitem{JonsJens05}
J\'onsd\'ottir, K.Y. and Vedel Jensen, E.B. (2005).
Gaussian radial growth.
{\em Image Analysis and Stereology\/}, 24:117--126.

\bibitem{Kere04}
Keren, D. (2004).
Topologically faithful fitting of simple closed curves.
{\em IEEE Transactions on Pattern Analysis and Machine Intelligence\/},
26:118--123.

\bibitem{Molc05}
Molchanov, I.S. (2005).
{\em Theory of random sets}.
London: Springer.

\bibitem{RogeWill94}
Rogers, L.C.G. and Williams, D. (1994).
{\em Diffusions, Markov processes, and martingales.
Volume One: Foundations}. (Second edition).
Chichester, Wiley.

\bibitem{Shaf76}
Shafer, G. (1976).
{\em Mathematical theory of evidence}. 
Princeton: Princeton University Press.

\bibitem{Soetetal10}
Soetaert, K., Petzoldt, T. and Woodrow Setzer, R. (2010).
Solving differential equations in R: Package deSolve.
{\em Journal of Statistical Software\/}, 33:1--25.

\bibitem{Youn10}
Younes, L. (2010).
{\em Shapes and diffeomorphisms}.
Berlin: Springer.

\bibitem{Zimm01}
Zimmermann, H.-J. (2001).
{\em Fuzzy set theory and its applications}. (Fourth edition).
Dordrecht: Kluwer.
\end{thebibliography}
\end{document}